\newcommand{\R}{\mathbb{R}}
\newcommand{\1}{\mathbbm{1}}
\newcommand{\N}{\mathbb{N}}
\newcommand{\Nc}{\mathcal{N}}
\newcommand{\Wc}{\mathcal{W}}
\newcommand{\Oc}{\mathcal{O}}
\renewcommand{\d}{\text{d}}
\newcommand{\Bc}{\mathcal{B}}
\newcommand{\Ec}{\mathcal{E}}
\newcommand{\Ex}{\mathbb{E}}
\newcommand{\Hc}{\mathcal{H}}
\newcommand{\Pc}{\mathcal{P}}
\newcommand{\Fc}{\mathcal{F}}
\DeclareMathOperator{\id}{I}
\newcommand{\law}{\mathrm{law}}
\DeclareMathOperator*{\argmin}{argmin}
\DeclareMathOperator*{\prox}{prox}
\DeclareMathOperator*{\dive}{div}
\DeclareMathOperator*{\TV}{TV}
\DeclareMathOperator*{\KL}{KL}
\DeclareMathOperator{\E}{\mathbb{E}}
\let\P\relax
\DeclareMathOperator{\P}{\mathbb{P}}
\setlist[enumerate]{leftmargin=.5in}
\setlist[itemize]{leftmargin=.5in}
\newcounter{assCounter}
\newenvironment{ass}
  {\refstepcounter{assCounter}%
   \begin{assInner}}
  {\end{assInner}}
\crefname{example}{Example}{Example}
\title{Ergodicity of Langevin Dynamics and its Discretizations for Non-smooth Potentials with Linearly Growing Drift}
\author{Lorenz Frühwirth\thanks{Faculty of Computer Science and Mathematics, University of Passau, Germany (\email{lorenz.fruehwirth@uni-passau.de}).}
\and
Andreas Habring\thanks{Institute of Visual Computing, Graz University of Technology, Austria (\email{andreas.habring@tugraz.at}).}}
\renewcommand{\phi}{\varphi}
\renewcommand{\epsilon}{\varepsilon}
\def\supplementfilename{supplement.pdf_}
\def\numbersupplementpages{\the\pdflastximagepages}
\newif\ifarXiv
\begin{document}

\maketitle

\begin{abstract}
This article is concerned with sampling from Gibbs distributions $\pi(x)\propto e^{-U(x)}$ using Markov chain Monte Carlo methods. In particular, we investigate Langevin dynamics in the continuous- and discrete-time setting for such distributions with potentials $U(x)$ which are strongly convex but possibly non-differentiable and whose subgradients grow at most linearly. We show that the corresponding subgradient Langevin dynamics are exponentially ergodic with invariant distribution $\pi$ in the continuous setting and that its explicit as well as a semi-implicit discretization are geometrically ergodic, the semi-implicit one exhibiting better convergence behavior for vanishing step-sizes.
We show that both discretizations approximate the target distribution $\pi$ for vanishing discretization step size.  Moreover, we prove that the discrete schemes satisfy the law of large numbers allowing us to use consecutive iterates in order to compute statistics of the stationary distribution instead of simulating multiple independent Markov chains.
Numerical experiments are provided confirming the theoretical findings and showcasing the practical relevance of the proposed methods in imaging applications.
\end{abstract}
\vspace{0.2cm}
\begin{keywords}
  Markov chain Monte Carlo, unadjusted Langevin algorithm, non-smooth sampling, Bayesian imaging, inverse problems
\end{keywords}

\section{Introduction}
This article is concerned with the use of Langevin dynamics for sampling from \emph{Gibbs} probability distributions of the form 
\begin{equation}\label{eq:target}
    \pi(x) = \frac{e^{-U(x)}}{Z}
\end{equation}
where $Z=\int e^{-U(y)}\d y$ is a mere normalization constant and the potential $U:\R^d\rightarrow \R$ is strongly convex but possibly non-differentiable, and such that its subgradient grows at most linearly (see \Cref{Ass:Potentials}). Sampling from Gibbs measures of the form \eqref{eq:target} is a task frequently arising in various applications such as Bayesian inference in mathematical imaging, see for instance \cite{pereyra2016proximal,durmus2022proximal,ehrhardt2023proximal,zach2022stable,zach2022computed,narnhofer2022posterior,purohit2024posterior}, or in the context of diffusion models as in \cite{guanxiong2023bayesian,chung2022score,song2020score,purohit2024posterior}. In particular, we will also consider the special case of potentials factoring as $U(x) = F(x) + G(x)$ with $F$ and $G$ having standard properties frequently encountered especially in the field of inverse problems where typically $F$ denotes the data fidelity term and $G$ denotes a regularization functional, see for example, \cite{habring2023subgradient,narnhofer2022posterior,pereyra2016proximal,ehrhardt2023proximal}. In this article we investigate the continuous-time Langevin dynamics governed by the potential $U(x)$ along with the Markov chains obtained from explicit and semi-implicit discretizations of these dynamics. In particular, we establish geometric ergodicity in both the continuous- and discrete-time setting. Moreover, we show that the suggested discretizations satisfy a law of large numbers (LLN) which renders these schemes particularly relevant for applications: If an LLN holds, consecutive iterates of a \emph{single} Markov chain can be used as a sample for the computation of statistics of the stationary distribution. In the converse case, without an LLN, an approximating sample of the stationary distribution can only be obtained by simulating several independent Markov chains which constitutes a significant increase in memory consumption and numerical effort. This is especially crucial for high dimensional applications such as imaging.
A popular strategy for obtaining samples from $\pi$ is to use Markov chain Monte Carlo (MCMC) methods, where a Markov chain $(X_k)_{k \in \N}$ is generated such that the law of $X_k$ approximates $\pi$ as $k\rightarrow \infty$. A possible way of obtaining such Markov chains is to discretize the over-damped Langevin diffusion SDE as proposed by \cite{rossky1978brownian,parisi1981correlation}
\begin{equation}\label{eq:intro:Langevin_SDE}
\d Y_t = -\nabla U(Y_t)\;\d t + \sqrt{2}\;\d B_t
\end{equation}
with $B_t$ denoting Brownian motion. Note that \eqref{eq:intro:Langevin_SDE} does not rely on knowledge of the normalization constant $Z$, which is hard to compute in practice. Under sufficient regularity conditions on $U$, respectively $\nabla U$, \eqref{eq:intro:Langevin_SDE} is ergodic with stationary distribution $\pi$ in total variation norm \cite[Theorem 2.1]{roberts1996exponential}, or in Wasserstein distance, see \cite{bolley2012convergence}. A discrete time Markov chain (MC) approximating \eqref{eq:intro:Langevin_SDE} is obtained, for example, via the Euler-Maruyama discretization, referred to as the unadjusted Langevin algorithm (ULA)
\begin{equation}
\label{eq:intro:Langevin_discr}
X_{k+1} =X_k -\tau_{k+1} \nabla U(X_{k}) + \sqrt{2\tau_{k+1}}\; B_{k+1}
\end{equation}
where $\tau_k>0$ is the step size and $(B_k)_{k \in \N}$ an i.i.d. sequence of $d$-dimensional standard Gaussian random variables. If $\tau_k=\tau$ for all $k$, $(X_k)_k$ admits a stationary distribution $\pi^\tau$ which is, in general, not equal to $\pi$, but approximates $\pi$ in the sense that $\pi^\tau\rightarrow \pi$ as $\tau\rightarrow 0$, cf.~\cite{talay1990, durmus2019analysis, mattingly2002}. For $\tau_k \rightarrow 0$ with $\sum_k \tau_k =\infty$, however, a direct approximation of $\pi$ without a remaining bias is feasible, see \cite{lamberton2003recursive,habring2023subgradient}. Non-asymptotic convergence bounds have been shown in the total variation norm~\cite{durmus2017nonasymptotic,dalalyan2017theoretical}, as well as in Wasserstein distance~\cite{durmus2019high,durmus2019analysis,habring2023subgradient}.

In this article, we focus on extending ergodicity and convergence results of Langevin-type methods to a broader class of non-differentiable potentials, for which such results have not yet been available. In particular we make the following contributions.
\subsection{Contributions}
\begin{enumerate}
    \item We show that, for potentially non-differentiable but strongly convex potentials $U$ whose subgradient grows at most linearly, the continuous-time Langevin diffusion is exponentially ergodic with stationary distribution $\pi$.
    \item We consider two different discretization schemes of the diffusion: an explicit subgradient scheme and, for potentials of the form $U(x)=F(x)+G(x)$, a semi-implicit proximal-gradient scheme. For both, we show geometric ergodicity when using a fixed step size $\tau$ (see~\Cref{prop:explicit_scheme_ergodic,prop:semi_implicit_fixed_point}) with the semi-implicit scheme yielding better convergence rates for $\tau$ close to zero (see~\Cref{prop:semi_implicit_fixed_point}). Moreover, we show convergence of the stationary distribution $\pi^\tau$ of the discrete schemes to the target distribution $\pi$ as $\tau \rightarrow 0$ with Wasserstein-2 error between $\pi$ and $\pi^\tau$ of order $\tau$ in~\Cref{prop:explicit_convergence_of_stationary,prop:implicit_final_convergence} (cf.~\cite{MALA_BRH_2013,Minorization_DEMS_2025}). To the best of our knowledge such ergodicity results in the non-differentiable (and non-Lipschitz continuous) case have not been available in the literature as previous works have assumed higher regularity~\cite{durmus2017nonasymptotic,durmus2019high,roberts1996exponential}, they have not shown ergodicity of the schemes~\cite{ehrhardt2023proximal,habring2023subgradient,durmus2019analysis}\footnote{there the authors show convergence of the ergodic means of the iterate-distributions without ergodicity of the scheme itself under the additional assumption of Lipschitz continuity}, or they have circumvented non-differentiability by prior smoothing~\cite{pereyra2016proximal,durmus2022proximal}.
    Specifically, contrary to previous works, we do neither require Lipschitz continuity of $U(x)$ (or $G(x)$ in the case $U(x) = F(x)+G(x)$) nor of $\nabla U(x)$. Instead we reduce the regularity requirements to at most linear growth of the subgradient $\partial U$.
    \item As a practically relevant contribution we show that the chains satisfy the law of large numbers allowing to use consecutive iterates of the chain without any thinning \cite[Remark 4.1]{narnhofer2022posterior} for the computation of statistics of the stationary distribution 
    which may result in a decrease in memory consumption in practice. 
    \item We present numerical experiments for potentials $U(x)$, where direct sampling (without prior smoothing) had not been possible before, confirming the theoretical results provided in the paper. Moreover, we conduct experiments for inverse imaging applications, namely, image denoising and deconvolution.
\end{enumerate}
\subsection{Organization of the article}
In \Cref{sec:related} we discuss related work for sampling from non-smooth potentials, in \Cref{sec:preliminaries} we introduce some relevant notation and terminology, in \Cref{sec:continuous_analysis} we analyze the continuous-time Langevin diffusion and in \Cref{sec:discrete_analysis} an explicit and a semi-implicit discretization. In  \Cref{sec:experiments} we show numerical results supporting the theory provided in the paper and in \Cref{sec:conclusion} we finish with a discussion of the paper.

\section{Related Work}\label{sec:related}
Conventional ULA \eqref{eq:intro:Langevin_discr} for sampling from Gibbs distributions relies on differentiability of the potential $U$, see, for instance, \cite{roberts1996exponential}. Since many applications, such as regression with $L^1$ loss or Lasso priors, as well as non-smooth regularization functionals in Bayesian imaging and inverse problems, lead to non-differentiable potentials, substantial research has been devoted to extending sampling methods to these settings.

\subsection{Langevin sampling from non-smooth potentials}
In~\cite{durmus2019analysis} the authors propose a subgradient as well as a proximal gradient method for sampling from non-smooth potentials. The subgradient method from~\cite{durmus2019analysis} is adapted in~\cite{habring2023subgradient} to allow for improved convergence results. In both works, it is shown that the iterates of the MC will be close to the target, however, none of these works establishes ergodicity---that is, convergence to a stationary distribution---of the iterates. In~\cite{ehrhardt2023proximal} it is shown that for a proximal-gradient scheme the convergence can be maintained despite computing the proximal mapping only approximately, thus, allowing also for its iterative computation within the algorithm. The work~\cite{salim2020primal} provides a duality result for proximal-gradient sampling viewed as an optimization problem in the space of probability measures. Non-smooth potentials might also be handled by adapting primal-dual algorithms from optimization to sampling. This approach was first proposed by \cite{narnhofer2022posterior} and has recently been thoroughly analyzed by \cite{burger2024coupling}. While providing valuable theoretical insights, unfortunately it turns out that the smoothness requirements on the involved functions are still rather strong. In~\cite{Sabanis_Johnston_ULA} Langevin sampling from non-smooth but strongly convex potentials is considered under the assumption that the points of non-differentiability are concentrated on suitable manifolds. 

\subsection{Potential smoothing and similar techniques}
Other works circumvent the issue of non-differentiability by smoothly approximating non-smooth parts of the potential. For the smooth approximation convergence results can be transferred from the classical literature and the task reduces to showing that the surrogate density, indeed, approximates the target. \cite{durmus2022proximal,pereyra2016proximal,brosse2017sampling} replace non-smooth parts of the density by its Moreau envelope. Expressing the gradient of the Moreau envelope via the proximal mapping, this approach effectively leads to a proximal-gradient sampling algorithm. \cite{durmus2022proximal,brosse2017sampling} show that the obtained smooth approximation of the target density can be made arbitrarily accurate in total variation (TV) by decreasing the Moreau-Yoshida parameter. (Note, however, that decreasing this parameter makes the method successively slower, see \Cref{table:computation_times_2d}.) The resulting method is coined MYULA (Moreau Yoshida unadjusted Langevin algorithm). In P-MALA (proximal Metropolis adjusted Langevin algorithm), see, for instance, \cite{pereyra2016proximal,cai2022proximal,luu2021sampling}, similarly, the proximal mapping is employed, however, convergence is assured by introducing a Metropolis-Hastings correction step within each iteration of the method. A general downside of such methods dealing with non-smooth potentials via proximal mappings is that the computational burden might increase significantly if the prox is not explicit which is frequently the case in practice. In contrast, an explicit scheme such as presented in \Cref{sec:explicit_scheme} can be used without additional computational effort as long as the subgradient of $U(x)$ is explicit. \cite{laumont2022bayesian} consider smoothing the target density before sampling similarly to MYULA. This time, however, the smoothing is achieved by convolving the non-smooth part with a Gaussian kernel. The gradient of the smoothed potential can be computed in special cases using Tweedie's formula which leads to a Plug and Play approach.

\subsection{Methods not employing Langevin dynamics}
We also want to mention some approaches for sampling which do not employ Langevin dynamics. \cite{liang2022proximal,chen2022improved,lee2021structured} consider the regularized density $\tilde{\pi}(x,y)\propto\exp(-U(x) - \mu\|x-y\|^2)$ from which they sample using a Gibbs algorithm, that is, sampling alternatingly, once from the regularized Gaussian distribution $\tilde{\pi}(x|y)$, which is referred to as a \emph{restricted Gaussian oracle}, and once from the simple Gaussian $\tilde{\pi}(y|x)$. This framework is cast as alternating sampling framework (ASF). Note that the marginal $\tilde{\pi}(x) = \int \tilde{\pi}(x,y)\;\d y \propto \exp(-U(x)) \int \exp(- \mu\|x-y\|^2)\;\d y \propto \pi(x)$ is in fact equal to the target density since the latter integral is independent of $x$. The restricted Gaussian oracle is implemented via a rejection sampling subroutine. A downside of these alternating sampling methods is the requirement of subroutines for the computation of the restricted Gaussian oracle which renders iterations computationally expensive. A similar related line of works are so-called \emph{asymptotically exact data augmentation models} \cite{vono2021asymptotically,rendell2021global} where a joint distribution $\tilde{\pi}_\rho(x,y)$ depending on a parameter $\rho$ is introduced. Instead of requiring the marginal distribution to be equal to the target, however, it is only required that $\tilde{\pi}_\rho(x)\rightarrow\pi(x)$ as $\rho\rightarrow0$ for all $x$. In this context, \cite{vono2019split} consider in an ADMM fashion for $\pi(x)\propto \exp(-F(x)-G(x))$ the approximation $\tilde{\pi}_\rho(x,y)=\exp(-F(x)-G(y)-\phi(x,y;\rho))$ with a suitable \emph{distance} function such that $\tilde{\pi}_\rho(x)\rightarrow\pi(x)$. Sampling is then again performed using a Gibbs sampler. Similarly, \cite{vono2022efficient} approximate distributions of the form $\pi(x)\propto \exp(-U(Kx))$ with a linear operator $K$ by $\tilde{\pi}_\rho(x,y)=\exp(-U(y)-\|Kx-y\|^2_2/(2\rho^2))$ such that a decoupling of operator and functional is achieved, for which Gibbs sampling is feasible. As a subroutine for sampling from the conditional density, \cite{vono2019split,vono2022efficient} propose to use MYULA, P-MALA, or rejection sampling.

\section{Notation and Preliminaries}\label{sec:preliminaries}
For $x,y\in\R^d$, $\langle x , y \rangle$ denotes the Euclidean inner product and $\|x\| = \sqrt{\langle x, x\rangle}$ the Euclidean norm. 
\paragraph{Notions of differentiability} For a convex function $\phi:\R^d\rightarrow (-\infty,\infty]$ we define the subdifferential as the (possibly empty) set
\[
\partial \phi(x) = \left\{y\in\R^d\;\middle|\; \forall h\in\R^d:\; \phi(x)+\langle y, h\rangle\leq \phi(x+h)\right\}.
\]
We use $\nabla \phi(x)$ to denote both the classical and weak gradient, with $\nabla_i \phi(x)$ its $i$-th entry. The weak gradient in this context is defined as the unique function $g\in L^1_{\mathrm{loc}}(\R^d)$ such that for all $\psi \in \mathcal{C}_c^{\infty}(\R^d)$ it holds $\int_{\R^d} \nabla_i \psi(x) \phi(x) \d x = - \int_{\R^d} \psi(x)g(x)\d x$.
For $d=1$ we will also write $\phi'(x)$ to denote the classical derivative. For any domain $\Omega \subset\R^d$ we denote the Sobolev space 
\[
H^1(\Omega)\coloneqq\left\{\phi:\Omega\rightarrow \R\;\middle|\; \text{$\phi$ admits a weak derivative with $\nabla\phi\in L^2(\Omega)$}\right\}
\]
and $H^1_\text{loc}(\R^d)\coloneqq\left\{\phi:\R^d\rightarrow \R\;\middle|\; \phi\in H^1(\Omega)\text{ for any bounded $\Omega\subset\R^d$} \right\}$. We further define the proximal mapping of a proper, convex, and lower semi-continuous function $\phi$ as
\begin{equation}\label{eq:defn_prox}
{\prox}_{\phi}(x) = \argmin_{z\in\R^d} \phi(z) + \frac{1}{2} \|x-z\|^2 = (I+\partial \phi)^{-1}(x).
\end{equation}
\paragraph{Probability measures and Wasserstein distance} 
We denote the Borel $\sigma$-algebra on $\R^d$ as $\Bc(\R^d)$ and the space of probability measures on $(\R^d,\Bc(\R^d))$ as $\Pc(\R^d)$. Moreover, for $p > 0$, let $\Pc_p(\R^d)$ denote the subspace of probability measures with finite $p$-th moment, that is, $\int_{\R^d} \| x\|^p \;\d \mu(x) < \infty$. For a measurable function $f:\R^d\rightarrow \R$ and a probability measure $\mu\in\Pc(\R^d)$, we denote $\mu(f)\coloneqq\int f(x)\;\d \mu(x)$. A coupling of two probability measures $\mu, \nu \in \Pc( \R^d)$ is a probability measure $\zeta \in \Pc( \R^d \times \R^d)$ with marginal distributions $\mu$ and $\nu$, i.e., for $A\in\Bc(\R^d)$, $\zeta(A\times\R^d)=\mu(A)$ and $\zeta(\R^d\times A)=\nu(A)$. We denote the set of all couplings of $\mu$ and $\nu$ as $\Pi(\mu,\nu)$. In an abuse of terminology, we will also refer to a couple of random variables $(X,Y)$ as a coupling of $\mu$ and $\nu$ if the joint distribution of $(X,Y)$ is an element of $\Pi(\mu,\nu)$. The (Kantorovich) Wasserstein $p$-distance between $\mu,\nu\in\Pc_p(\R^d)$ is defined as
\begin{equation*}
\Wc_p(\mu,\nu) = \left(\inf\limits_{\zeta\in\Pi(\mu,\nu)} \int \|x-y\|^p\;\d \zeta(x,y)\right)^{1/p} = \left(\inf\limits_{\substack{X\sim\mu\\Y\sim\nu}} \E[\|X-Y\|^p]\right)^{1/p}.
\end{equation*}
The infimum in the definition of the Wasserstein-$2$ distance is, in fact, attained \cite{villani2009optimal} and we will refer to the minimizer as an optimal coupling for $\mu,\nu$.

\paragraph{Markov chains, Markov processes, and ergodicity}
We call a function $P:\R^d\times\Bc(\R^d)\rightarrow [0,1]$ a Markov transition kernel \cite[Section 3.4.1]{meyn2012markov} if for each $x$, $P(x,\;\cdot\;)$ is a probability measure and for each $A\in\Bc(\R^d)$, $P(\;\cdot\;,A)$ is measurable. A sequence of random variables $(X_k)_{k \in \N}$ on $\R^d$ is called a (time-homogeneous) Markov chain (MC) with transition kernel $P$ and initial distribution $\mu_0\in\Pc(\R^d)$ if for any $n\in \N$, and $A_0,\dots A_n\in\Bc(\R^d)$ it holds
\begin{equation*}
    \begin{aligned}
        P[X_0\in A_0,\dots,& \;X_n\in A_n]  = \\
& \int\limits_{x_0\in A_0}\dots   \int\limits_{x_{n-1}\in A_{n-1}}  P(x_{n-1},A_n) P(x_{n-2}, \d x_{n-1}) \ldots  P(x_0,\d x_1) \d\mu_0(x_0).
    \end{aligned}
\end{equation*}
A distribution $\mu_\infty\in\Pc(\R^d)$ is called \emph{stationary} or \emph{invariant} for the MC if
\[
\mu_\infty(A) = \int P(x,A)\;\d\mu_\infty(x),\quad A\in\Bc(\R^d)
\]
and we say the chain is ergodic if there exists such a stationary distribution $\mu_\infty$ and $\mu_k$, the law of $X_k$, converges to this stationary distribution as $k\rightarrow\infty$ in an appropriate sense. The chain is called geometrically ergodic if for all $k$, $d(\mu_k,\mu_\infty)\leq c\lambda^k$ for some $c\geq 0$, $\lambda\in (0,1)$ and $d(\;\cdot\;,\;\cdot\;)$ a suitable metric on the space of probability measures. In the continuous-time setting these notions can be generalized as follows: A (time-homogeneous) transition function $(P_t)_{t\geq 0}$ is a family of maps, such that for each $t\geq 0$, $P_t$ is a Markov transition kernel and the family satisfies the Chapman-Kolmogorov equation \cite[Definition 1.5]{sharpe1988general}
\[
P_{s+t}(x,A) = \int P_s(y,A)\; P_t(x,\d y),\quad A\in\Bc(\R^d),\;s,t\geq 0.
\]
The family of random variables $(X_t)_{t \geq 0}$ on $\R^d$ is a (time-homogeneous) Markov process with transition function $(P_t)_{t \geq 0}$ if for any bounded and measurable $f:\R^d\rightarrow \R$
\[
\E[f(X_{t+s})\;|\;\Fc_s] = \int f(y) \;P_t(X_s,\d y),\quad 0\leq s,t<\infty
\]
where $\Fc_t = \sigma\left(\left\{ X_s\;\middle|\; 0\leq s\leq t\right\}\right)$ is the $\sigma$-algebra generated by the process up until time $t$.
For a Markov process $(X_t)_{t\geq 0}$ on $\R^d$ with transition function $(P_t)_{t\geq 0}$ a probability distribution $\mu_\infty\in\Pc(\R^d)$ is called \emph{stationary} or \emph{invariant} if it satisfies
\[
\mu_\infty(A) = \int P_t(x,A)\; \d \mu_\infty(x),\quad A\in\Bc(\R^d),\;t\geq 0.
\]
As before, we say the process $(X_t)_{t \in [0, \infty)}$ is \emph{ergodic} if there exists such a stationary distribution $\mu_\infty$ and $\mu_t$, the law of $X_t$, converges to this stationary distribution as $t\rightarrow\infty$. We call the process exponentially ergodic if for all $t\geq 0$, $d(\mu_t,\mu_\infty)\leq c\lambda^t$ some $c\geq 0$, $\lambda\in (0,1)$.

\section{Analysis of the Continuous-time Dynamics}\label{sec:continuous_analysis}
Our theoretical analysis starts with investigating the continuous-time Langevin dynamics governed by the potential $U$. That is, we will consider Ito-processes $(X_t)_{t \geq 0}$ satisfying the stochastic differential equation (SDE)
\begin{equation}
\begin{cases}
\label{eq:sde}
\d X_t& = -\nabla U(X_t) dt + \sqrt{2} \d W_t, \\
\phantom{\d} X_0  & \sim  \mu
\end{cases}
\end{equation}
where $(W_t)_{t \geq 0}$ is a standard $d$-dimensional Brownian motion, $ \mu \in \mathcal{P}_2( \R^d)$, and $\nabla U$ is the weak derivative, which exists under the imposed assumptions on $U$. Under \Cref{Ass:Potentials}, we establish the existence and uniqueness of a solution to \eqref{eq:sde}, as well as the ergodicity with unique stationary distribution $\pi$. We denote the distribution of a solution $X_t$ as $\mu_t$.
\begin{ass}
\label{Ass:Potentials}
The potential $U : \R^d \rightarrow \R$ satisfies the following conditions:
\begin{enumerate}[label=(\roman*)]
    \item $m$-strong convexity: there exists $m>0$ such that for any $x,y\in\R^d$, $\lambda\in[0,1]$
    \[
    U(\lambda x + (1-\lambda) y )\leq \lambda U(x) + (1-\lambda)U(y) - \frac{m}{2}\lambda(1-\lambda)\|y-x\|^2.
    \]
    \item At most linear growth of the subgradient: There exists $C>0$ such that
    \[
    \| g \| \leq C \left( 1 + \|x\| \right),\quad x\in\R^d,\; g\in\partial U(x).
    \]
\end{enumerate}
\end{ass}
\begin{remark}\
\label{rem:Ass_A}
\begin{itemize}
    \item Strongly convex functions on $\R^d$ are coercive and continuous. Thus, $U(x)$ admits a minimizer which we can assume to be at $x=0$ without loss of generality. (In the converse case, sample from the potential $\hat{U}(x) = U(x+\hat{x})$ with $\hat{x}$ the minimizer and afterwards, add $\hat{x}$ to the obtained samples.) 
    \item Since $U$ is a convex function it follows immediately that $U \in H^1_{\text{loc}}( \R^d)$. Moreover, by Alexandrov's theorem, $U$ is (twice) differentiable in the classical sense on a set whose complement has Lebesgue measure zero. On that set the classical derivative of $U$ coincides with the weak derivative. On the remaining zero-set where this is not the case, one may select an arbitrary subgradient.
    \item The linear growth of the subgradient together with convexity immediately implies quadratic growth of $U$. Indeed, we have for $g\in\partial U(x)$ and some $c>0$
    \begin{equation}
        | U(x) | \leq |U(0)| + |\langle g,x\rangle |\leq |U(0)| + \|g\|\|x\|\leq c(1+\|x\|^2).
    \end{equation}
    \item By $m$-strong convexity of $U$, there exists a convex function $c(x)$ such that $U(x) = c(x) + \frac{m}{2}\|x\|^2$. Therefore, since convex functions in turn are lower bounded by linear functions, all moments of $\pi$ are finite, that is, for any $p\geq 0$, $\int_{\R^d} \|x\|^p e^{-U(x)} dx  < \infty.$
\end{itemize}
\end{remark}

\begin{remark}\
\begin{itemize}
 \item For now, results will be proven solely for the potential $U$. Later, particularly when discussing semi-implicit schemes in \Cref{sec:semi_implicite_scheme} and in numerical experiments in \Cref{sec:experiments}, we will consider the specific case of potentials that factor as $U(x)=F(x)+G(x)$, where $F$ and $G$ each satisfy certain conditions. This setting is particularly relevant for the considered applications in inverse problems.
\item Interestingly, the linear growth condition on the drift $\nabla U(x)$ plays a pivotal role in establishing several key results presented in this work: We need at most linear growth of $\nabla U$ for guaranteeing the existence of solutions to the time-continuous SDE. On the other hand, in order to characterize the stationary distribution of \eqref{eq:sde}, we will invoke the corresponding weak Fokker-Planck equation where the linear growth of $\nabla U(x)$ plays a vital role in showing uniqueness of the solution to this PDE. Furthermore, the growth condition allows us to extend methods from \cite{durmus2019analysis,habring2023subgradient} to prove convergence of the discrete schemes without assuming $\nabla U(x)$ to be globally bounded.
\end{itemize}
\end{remark}
\subsection{Existence and uniqueness of the continuous-time process}
\label{sec:Ex_unique}
We start with existence of a solution of the SDE \eqref{eq:sde}.

\begin{theorem}\cite[Theorem 1]{zhang2005SDE}
\label{thm:Ex_Uniq_SDE}
Let $U: \R^d \rightarrow \R$ satisfy \Cref{Ass:Potentials}. Then, for any initial value $X_0\sim \mu \in \mathcal{P}_2(\R^d)$, the SDE \eqref{eq:sde} has a unique strong solution $(X_t)_{t \geq 0}$.
\end{theorem}
\begin{remark}\
\label{remark:sde_evolution_well_defined}
\begin{itemize}
    \item The uniqueness of a strong solution for any fixed initial random variable $X_0$ implies pathwise uniqueness in the sense of Definition 2 from \cite{yamada1971uniqueness}. This, in turn, implies uniqueness in law \cite[Proposition 1]{yamada1971uniqueness}. Finally, we find that, for fixed $t \geq 0$, the map
    \begin{equation*}
        \begin{aligned}
            \Pc_2(\R^d)&\rightarrow \Pc_2(\R^d)\\
            \mu &\mapsto \mu_t\coloneqq\text{Law}(X_t),\quad \text{where }
            \begin{cases}
                X_t \text{ satisfies \eqref{eq:sde}}\\
                X_0\sim\mu
            \end{cases}
        \end{aligned}
    \end{equation*}
    is well-defined by~\Cref{lemma:continuous_second_moment} and since the distribution of $X_t$ is independent of the specific choice of $X_0$ as long as the distribution of $X_0$ is fixed \cite[Corollary 2]{yamada1971uniqueness}.
    \item The family of distributions $(\mu_t)_{t > 0}$ possesses probability densities $(p_t)_{t > 0}$ (see \cite[Corollary 6.3.2]{bogachev2022fokker} together with~\Cref{rmk:FKP}), which implies that $X_t$ almost surely stays away from the set of points, where $\nabla U$ does not exist in the strong sense. 
\end{itemize}
\end{remark}

\subsection{Ergodicity of the continuous-time process}
\label{sec:Ergodicity}
In this section we prove that the solution $(X_t)_{t \geq 0}$ of the SDE \eqref{eq:sde} is exponentially ergodic. We begin by showing that the SDE solution satisfies a weak version of the Fokker-Planck equation.
\begin{lemma}\label{prop:weak_FKP}
    A solution $(X_t)_{t \geq 0}$ of the SDE \eqref{eq:sde} satisfies the following weak Fokker-Planck equation: For any $\phi\in C^2(\R^d)$ with $\int_t^{t+h} \Ex[\|\nabla \phi(X_s)\|^2\;|\; \mathcal{F}_t]\;\d s<\infty$ it holds
    \begin{equation}\label{eq:FKP1}
        \begin{aligned}
            \Ex\left[ \phi(X_{t+h})-\phi(X_t) \right]
            = \Ex\left[\int\limits_{t}^{t+h} \langle -\nabla U(X_s), \nabla \phi(X_s)\rangle + \Delta \phi(X_s)\; \d s \right]
        \end{aligned}
    \end{equation}
\end{lemma}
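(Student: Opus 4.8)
The plan is to derive the identity from Itô's formula applied to $\phi(X_t)$, where $(X_t)_{t\ge 0}$ is the strong solution from Theorem 1. Since $\phi \in C^2(\R^d)$ and $(X_t)$ is a continuous semimartingale with $\d X_t = -\partial U(X_t)\,\d t + \sqrt{2}\,\d W_t$, the classical Itô formula gives, pathwise,
\[
\phi(X_{t+h}) - \phi(X_t) = \int_t^{t+h} \langle \nabla\phi(X_s), -\partial U(X_s)\rangle\,\d s + \int_t^{t+h}\langle \nabla\phi(X_s), \sqrt{2}\,\d W_s\rangle + \int_t^{t+h}\Delta\phi(X_s)\,\d s,
\]
using that the quadratic-variation term is $\tfrac{1}{2}\sum_{i,j}\partial_{ij}\phi(X_s)\cdot 2\delta_{ij}\,\d s = \Delta\phi(X_s)\,\d s$. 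Taking expectations, the identity \eqref{eq:FKP1} follows provided the stochastic integral term has zero expectation; the Lebesgue integrals are integrable because $\partial U$ grows linearly (Remark 1), $\phi\in C^2$, and $\Ex[\sup_{s\le t+h}\|X_s\|^2]<\infty$ by Theorem 1, so $\nabla\phi(X_s)$ and $\Delta\phi(X_s)$ are controlled on the compact range of the (a.s. finite) supremum — more carefully, one localizes as below.

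First I would introduce stopping times $\tau_n = \inf\{s\ge t : \|X_s\| \ge n\}$ and work on $[t, (t+h)\wedge \tau_n]$, where everything is bounded: on this interval $\|X_s\|\le n$, so $\nabla\phi(X_s)$ is bounded (as $\phi\in C^2$, hence $\nabla\phi$ continuous hence bounded on $\overline{B(0,n)}$), making $s\mapsto \langle\nabla\phi(X_{s\wedge\tau_n}),\sqrt{2}\,\d W_s\rangle$ a genuine martingale with zero expectation. Thus \eqref{eq:FKP1} holds with $t+h$ replaced by $(t+h)\wedge\tau_n$. Then I would pass to the limit $n\to\infty$: by Theorem 1, $\tau_n\to\infty$ a.s. (since $\sup_{s\le t+h}\|X_s\|<\infty$ a.s.), so $X_{(t+h)\wedge\tau_n}\to X_{t+h}$ a.s. by path continuity, and the hypothesis $\int_t^{t+h}\Ex[\|\nabla\phi(X_s)\|^2]\,\d s<\infty$ together with continuity of the paths lets one apply dominated convergence to the drift and Laplacian integrals. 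For the left-hand side one needs uniform integrability of $\{\phi(X_{(t+h)\wedge\tau_n})\}_n$, which follows from the quadratic growth of $|\phi|$ being dominated — actually here I would instead observe that, taking expectations \emph{before} localizing fails, so the cleanest route is: the martingale property gives $\Ex[\phi(X_{(t+h)\wedge\tau_n}) - \phi(X_t)] = \Ex[\int_t^{(t+h)\wedge\tau_n}(\langle-\partial U(X_s),\nabla\phi(X_s)\rangle + \Delta\phi(X_s))\,\d s]$; the right side converges by dominated convergence (dominating function coming from $\|\nabla\phi(X_s)\|\|\partial U(X_s)\| \le \tfrac12(\|\nabla\phi(X_s)\|^2 + C^2(1+\|X_s\|)^2)$, which is integrable by hypothesis and Theorem 1, plus $|\Delta\phi(X_s)|$ bounded on the range), and hence the left side converges too, identifying its limit as $\Ex[\phi(X_{t+h}) - \phi(X_t)]$ again via dominated/monotone convergence using the same quadratic bound on $|\phi|$ from Assumption 1(i) and finiteness of all moments in Theorem 1.

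The main obstacle I anticipate is the integrability bookkeeping in the limiting argument — specifically, ensuring the stochastic integral's expectation genuinely vanishes without assuming $\nabla\phi$ is globally bounded. The hypothesis $\int_t^{t+h}\Ex[\|\nabla\phi(X_s)\|^2\,|\,\mathcal F_t]\,\d s<\infty$ is precisely what guarantees $s\mapsto\int_t^s \langle\nabla\phi(X_r),\sqrt2\,\d W_r\rangle$ is an $L^2$-martingale (not merely a local martingale) by the Itô isometry, so in fact one may be able to skip the localization for the stochastic term and invoke this directly; I would still localize to justify applying Itô's formula cleanly and to handle the deterministic integrals, then remove the localization using the stated moment bounds. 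A minor point to be careful about: $\partial U$ is only a measurable selection of the subdifferential, but since $U$ is differentiable Lebesgue-a.e. (Remark 1, via Alexandrov) and $X_s$ has a density for $s>0$, the selection agrees a.s. with $\nabla U$, so there is no ambiguity in the integrals.
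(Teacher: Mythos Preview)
Your approach is correct and essentially the same as the paper's: apply It\^o's formula to $\phi(X_t)$, then take expectations and use the integrability hypothesis on $\nabla\phi(X_s)$ to make the stochastic integral vanish. The paper simply cites standard references (Khasminskii, Gikhman) for the vanishing of the stochastic integral under the $L^2$ condition rather than spelling out the localization argument, but your added detail is sound and fills in exactly what those references provide.
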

\begin{proof}
   Using Ito's formula \cite[Theorem 3.3]{khasminskii2012stochastic} (note that $(\nabla U(X_t))_{t \ge 0}$ is of class \textbf{L} since $\nabla U$ grows at most linearly and $(X_t)_{t \ge 0}$ is of class \textbf{L} by~\Cref{lemma:continuous_second_moment}), we find
    \begin{equation*}
            \phi(X_{t+h})-\phi(X_{t})
            = \int\limits_{t}^{t+h}\langle -\nabla U(X_s), \nabla \phi(X_s)\rangle + \Delta \phi(X_s)\; \d s + \int\limits_t^{t+h} \sqrt{2} \nabla \phi(X_s) \d W_s.
    \end{equation*}
    Taking the expectation leads to the desired result since the stochastic integral with respect to the Brownian motion $(W_t)_{t \geq 0}$ vanishes under the assumption on $\nabla \phi(X_s)$ \cite[Section 3.3]{khasminskii2012stochastic}, \cite[Chapter 8, Section 2]{gikhman1965introduction}.
\end{proof}

\begin{remark}\label{rmk:FKP}
An analogous proof using $\phi(x,t)$ with explicit time dependence yields the Fokker-Planck equation $\partial_t \mu_t(x) = \dive (\mu_t(x)\nabla U(x)) + \Delta_x \mu_t(x)$.
\end{remark}
Before proving geometric ergodicity we need two more auxiliary results.
\begin{lemma}\cite[Theorem 4.3, Chapter 2]{MAO_SDE_2011}
\label{lemma:continuous_second_moment}
    The function $t\mapsto\Ex[\|X_t\|^2]$ is continuous.
\end{lemma}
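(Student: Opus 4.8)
\textbf{Proof plan for \Cref{lemma:continuous_second_moment}.}
The plan is to show that $t\mapsto\Ex[\|X_t\|^2]$ is continuous by establishing continuity of $t\mapsto X_t$ in $L^2(\Omega;\R^d)$, which is stronger but more natural to work with. From \Cref{thm:Ex_Uniq_SDE} we already know that $(X_t)_{t\geq 0}$ is a continuous semimartingale with $\Ex[\sup_{0\leq s\leq t}\|X_s\|^2]<\infty$ for every $t$, so the quantity under consideration is finite for all $t$. Fix $0\leq t_1\leq t_2$. Using the integral form of the SDE \eqref{eq:sde},
\[
X_{t_2}-X_{t_1} = -\int_{t_1}^{t_2}\partial U(X_s)\,\d s + \sqrt{2}\,(W_{t_2}-W_{t_1}),
\]
so that, by the elementary inequality $\|a+b\|^2\leq 2\|a\|^2+2\|b\|^2$,
\[
\Ex\bigl[\|X_{t_2}-X_{t_1}\|^2\bigr] \leq 2\,\Ex\Bigl[\Bigl\|\int_{t_1}^{t_2}\partial U(X_s)\,\d s\Bigr\|^2\Bigr] + 4\,\Ex\bigl[\|W_{t_2}-W_{t_1}\|^2\bigr].
\]
The second term equals $4d\,(t_2-t_1)$ and vanishes as $|t_2-t_1|\to 0$.

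For the first term I would use the linear growth of $\partial U$ from \Cref{Ass:Potentials} together with the Cauchy–Schwarz inequality in the time variable: since $\|\partial U(x)\|\leq C(1+\|x\|)$,
\[
\Bigl\|\int_{t_1}^{t_2}\partial U(X_s)\,\d s\Bigr\|^2 \leq (t_2-t_1)\int_{t_1}^{t_2}\|\partial U(X_s)\|^2\,\d s \leq (t_2-t_1)\,C'\int_{t_1}^{t_2}(1+\|X_s\|^2)\,\d s.
\]
Taking expectations and bounding $\Ex[\|X_s\|^2]\leq \Ex[\sup_{0\leq r\leq t_2}\|X_r\|^2]=:K_{t_2}<\infty$ uniformly for $s\in[t_1,t_2]$ gives the bound $(t_2-t_1)^2\,C'(1+K_{t_2})$, which again tends to zero as $|t_2-t_1|\to 0$. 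Combining the two estimates yields $\Ex[\|X_{t_2}-X_{t_1}\|^2]\to 0$ as $t_2\to t_1$, i.e.\ $t\mapsto X_t$ is $L^2$-continuous. Then
\[
\bigl|\,\|X_{t_2}\|_{L^2}-\|X_{t_1}\|_{L^2}\,\bigr| \leq \|X_{t_2}-X_{t_1}\|_{L^2} \to 0,
\]
and since $\Ex[\|X_t\|^2]=\|X_t\|_{L^2}^2$ and squaring is continuous, the claim follows.

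I do not expect a genuine obstacle here; the only point requiring a little care is making sure the moment bound $\Ex[\sup_{0\leq s\leq t}\|X_s\|^2]<\infty$ from \Cref{thm:Ex_Uniq_SDE} is invoked to dominate $\Ex[\|X_s\|^2]$ uniformly on the relevant compact time interval, so that the Cauchy–Schwarz-in-time estimate produces a bound that is genuinely $o(1)$ in $|t_2-t_1|$ rather than merely finite. One should also note that the argument only needs a one-sided statement (continuity is a local, symmetric property, so treating $t_1\leq t_2$ and then swapping roles suffices), and that no regularity of $\partial U$ beyond the linear-growth bound is used.
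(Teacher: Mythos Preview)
Your argument is correct. It differs from the paper's proof, which is a one-line application of dominated convergence: since $(X_t)_{t\geq 0}$ has almost surely continuous paths and $\|X_s\|^2\leq \sup_{0\leq r\leq T}\|X_r\|^2$ is integrable for any fixed $T$ by \Cref{thm:Ex_Uniq_SDE}, the map $s\mapsto \|X_s\|^2$ is a.s.\ continuous and dominated, so $\Ex[\|X_t\|^2]$ is continuous directly. You instead establish the stronger statement that $t\mapsto X_t$ is continuous in $L^2(\Omega;\R^d)$ by estimating the increment via the SDE, the linear growth of $\partial U$, and Cauchy--Schwarz in time. The paper's route is shorter and uses only pathwise continuity plus the envelope bound; your route avoids invoking path continuity, works purely with moments, and even yields a quantitative bound $\Ex[\|X_{t_2}-X_{t_1}\|^2]=O(|t_2-t_1|)$ on compact time intervals. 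Either is perfectly adequate here.
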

\begin{proposition}
\label{lemma:contraction}
For any two solutions of the SDE \eqref{eq:sde} $X_t \sim \mu_t$ and $\tilde{X}_t \sim \tilde{\mu}_t$ for $t\geq 0$ with initial distributions $\mu_0, \tilde{\mu}_0 \in \mathcal{P}_2(\R^d)$ it holds $\Wc_2^2(\mu_t, \tilde{\mu}_t) \leq e^{-2mt} \Wc_2^2(\mu_0, \tilde{\mu}_0)$ where $m$ is the modulus of strong convexity of $U$ from \Cref{Ass:Potentials}.
\end{proposition}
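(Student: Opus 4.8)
The plan is to use a \emph{synchronous coupling} driven by a common Brownian motion together with monotonicity of the (strongly monotone) subdifferential. First I would let $(X_0,\tilde X_0)$ be an optimal coupling of $\mu_0$ and $\tilde\mu_0$, so that $\Ex[\|X_0-\tilde X_0\|^2]=\Wc_2^2(\mu_0,\tilde\mu_0)$; such a coupling exists by the remark recalled at the end of \Cref{sec:preliminaries}. Then I would run the two solutions $(X_t)_{t\geq 0}$, $(\tilde X_t)_{t\geq 0}$ of \eqref{eq:sde} from these initial values, both driven by the \emph{same} Brownian motion $(W_t)_{t\geq 0}$. By \Cref{thm:Ex_Uniq_SDE} these are well-defined square-integrable strong solutions, and by \Cref{remark:sde_evolution_well_defined} their time-$t$ marginals are exactly $\mu_t$ and $\tilde\mu_t$, so $(X_t,\tilde X_t)\in\Pi(\mu_t,\tilde\mu_t)$ for every $t\geq 0$.

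Next I would study the difference $D_t:=X_t-\tilde X_t$. Subtracting the two copies of \eqref{eq:sde}, the diffusion terms cancel and
\[
D_t = D_0 - \int_0^t \bigl(\partial U(X_s)-\partial U(\tilde X_s)\bigr)\,\d s .
\]
By the linear growth of $\partial U$ (see \Cref{rem:Ass_A}) together with $\Ex[\sup_{s\leq t}\|X_s\|^2]<\infty$ and the analogous bound for $\tilde X$, the integrand is almost surely locally integrable in $s$, hence $t\mapsto D_t$ is almost surely absolutely continuous and so is $t\mapsto\|D_t\|^2$, with
\[
\frac{\d}{\d t}\|D_t\|^2 = -2\bigl\langle X_t-\tilde X_t,\ \partial U(X_t)-\partial U(\tilde X_t)\bigr\rangle \qquad \text{for a.e. } t .
\]
Writing $U=c+\tfrac{m}{2}\|\cdot\|^2$ with $c$ convex (again \Cref{rem:Ass_A}), monotonicity of $\partial c$ gives, for every measurable selection of the subdifferentials, $\langle x-y,\partial U(x)-\partial U(y)\rangle\geq m\|x-y\|^2$, so the above becomes the pathwise differential inequality $\frac{\d}{\d t}\|D_t\|^2\leq -2m\|D_t\|^2$. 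Gr\"onwall's inequality then yields $\|D_t\|^2\leq e^{-2mt}\|D_0\|^2$ almost surely.

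Finally I would take expectations and use that $(X_t,\tilde X_t)$ is a coupling of $\mu_t,\tilde\mu_t$:
\[
\Wc_2^2(\mu_t,\tilde\mu_t)\ \leq\ \Ex\bigl[\|X_t-\tilde X_t\|^2\bigr]\ \leq\ e^{-2mt}\,\Ex\bigl[\|X_0-\tilde X_0\|^2\bigr]\ =\ e^{-2mt}\,\Wc_2^2(\mu_0,\tilde\mu_0),
\]
which is the claim. I expect the only genuinely delicate point to be the rigorous justification that $t\mapsto\|D_t\|^2$ is absolutely continuous with the stated a.e.\ derivative, i.e.\ that one may differentiate through the non-smooth drift. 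This is comparatively harmless here, however, because $D_t$ has \emph{no martingale part}, so ordinary pathwise calculus applies as soon as local integrability of the drift difference is in hand and no Itô correction term enters; the strong monotonicity estimate is valid for any measurable single-valued selection $\partial U$, so the abuse of notation in \eqref{eq:sde} causes no problem.
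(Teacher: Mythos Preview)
Your proof is correct and follows the same overall strategy as the paper (synchronous coupling, strong monotonicity of $\partial U$, Gr\"onwall), but the technical execution is genuinely cleaner. The paper applies the weak Fokker--Planck equation (\Cref{prop:weak_FKP}) to the test function $\phi(x,\tilde x)=\tfrac12\|x-\tilde x\|^2$ on the coupled process, obtains an integral inequality for $\psi(t)=\Ex[\|X_t-\tilde X_t\|^2]$, and then has to argue separately (via \Cref{lemma:continuous_second_moment} and the Lebesgue differentiation theorem) that $\psi$ is differentiable before invoking Gr\"onwall at the level of expectations. You instead exploit directly that the diffusion parts cancel in $D_t=X_t-\tilde X_t$, so $D_t$ is a finite-variation process and ordinary pathwise calculus applies; this yields the stronger almost-sure bound $\|D_t\|^2\leq e^{-2mt}\|D_0\|^2$ before taking expectations. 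Your route avoids both It\^o's formula and the differentiability discussion for $\psi$, at the modest cost of checking local integrability of $s\mapsto \partial U(X_s)-\partial U(\tilde X_s)$, which you handle correctly via linear growth and the moment bound from \Cref{thm:Ex_Uniq_SDE}. The paper's approach, on the other hand, would adapt more readily to couplings where the Brownian motions are not identical.
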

\begin{proof}
Let us denote the coupling $Z_t=(X_t,\Tilde{X}_t)$ solving the SDE
\begin{equation*}
    \begin{aligned}
            \d Z_t = \left[ \begin{array}{c}
            -\nabla U (X_t)\\
            -\nabla U (\tilde{X}_t)
            \end{array} \right]\d t + \sqrt{2} \left[ \begin{array}{c}
            \d W_t\\
            \d W_t
            \end{array} \right].
    \end{aligned}
\end{equation*}
That is, both $X$ and $\tilde{X}$ satisfy the SDE \eqref{eq:sde} with the exact same Brownian motion $(W_t)_{t \geq 0}$, however, with potentially different initial distributions. Consider $\phi(z) = \frac{1}{2}\|x-\tilde{x}\|^2$. Note that 
\[\nabla \phi (z) = \left[\begin{array}{c}
    x - \tilde{x}\\
    \tilde{x} - x
\end{array}\right], \quad (\nabla_x+\nabla_{\tilde{x}})\cdot (\nabla_x+\nabla_{\tilde{x}}) \phi (z) = 0
\]
for which it holds true that $\|\nabla \phi(z)\|^2 = 2\|x\|^2 + 2\|\tilde{x}\|^2 - 4\langle x, \tilde{x}\rangle \leq 4\|x\|^2 + 4\|\tilde{x}\|^2 = 4\|z\|^2$. Thus, $\nabla \phi (Z_t)$ is square integrable as $Z_t$ is and we can apply \Cref{prop:weak_FKP} (slightly adapted to the new drift and diffusion). Using $m$-strong convexity of $U$, this leads to
\begin{equation}
\label{eq:integral_contraction}
    \begin{split}
        \frac{1}{2}\Ex\left[\|X_{t+h} - \tilde{X}_{t+h}\|^2 - \|X_t-\tilde{X}_t\|^2 \right] 
        & = \Ex\left[ \int\limits_{t}^{t+h} \left \langle\left[
        \begin{array}{c}
        -\nabla U (X_s) \\
        -\nabla U (\tilde{X}_s)
        \end{array} 
        \right],
        \left[
        \begin{array}{c}
        X_s-\tilde{X}_s\\
        \tilde{X}_s-X_s
        \end{array} 
        \right] \right \rangle\; \d s \right] \\
        &\leq \Ex\left[ \int\limits_{t}^{t+h} -m\|X_s-\tilde{X}_s\|^2 \; \d s \right] \\
        &= - m\int\limits_{t}^{t+h} \Ex\left[ \|X_s-\tilde{X}_s\|^2 \right]\; \d s
    \end{split}
\end{equation}
for any $h>0$. Denoting $\psi(t) = \Ex\left[\|X_t-\tilde{X}_t\|^2 \right]$ we arrive at $\psi(t+h) \leq \psi(t) -2m \int_t^{t+h}\psi(s)\; \d s$. Noting that $\psi\geq 0$ it follows $2m\int_{0}^t\psi(s)\;\d s \leq \psi(0)$ for any $t\geq 0$ and, thus, $\psi(t)\rightarrow 0$ as $t\rightarrow\infty$. However, we aim for a more precise estimate. Note that continuity of $t\mapsto\Ex[\|X_t\|^2]$ (see \Cref{lemma:continuous_second_moment}) together with at most linear growth of the drift $\nabla U(x)$ implies that
\[s\mapsto \Ex\left[ \left \langle\left[\begin{array}{c}
        \nabla U (X_s)\\
        \nabla U (\tilde{X}_s)
        \end{array} \right], \left[\begin{array}{c}
        X_s-\tilde{X}_s\\
        \tilde{X}_s-X_s
        \end{array} \right] \right \rangle\right] \]
is continuous as well. Therefore, \eqref{eq:integral_contraction} implies that $\psi$ is absolutely continuous and $\psi'(t) \leq -2m \psi(t)$. Grönwall's lemma thus yields
\[\Wc_2^2(\mu_{t}, \tilde{\mu}_{t})\leq \Ex[\|X_{t}-\tilde{X}_{t}\|^2] = \psi(t)\leq \psi(0)e^{-2m t} = \Ex[\|X_{0}-\tilde{X}_0\|^2] e^{-2mt}\]
for any $t>0$. Plugging in an optimal coupling of $\mu_0$ and $\tilde{\mu}_0$ for $X_0$ and $\tilde{X}_0$ and taking the expectation  we obtain $\Wc_2^2(\mu_{t}, \tilde{\mu}_{t})\leq e^{-2mt}\Wc_2^2(\mu_0,\tilde{\mu}_0)$.
\end{proof}

\begin{theorem}
\label{thm:fixedpoint_SDE}
There exists a unique stationary distribution $\mu_{\infty} \in \mathcal{P}_2(\R^d)$ such that for any initial distribution $\mu_0 \in \mathcal{P}_2(\R^d)$ the family $(\mu_t)_{t\geq0}$ satisfies $\Wc_2^2(\mu_t, \mu_{\infty}) \leq e^{-2mt} \Wc_2^2(\mu_0, \mu_{\infty})$.
\end{theorem}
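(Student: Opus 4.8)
The plan is to combine the contraction estimate of \Cref{lemma:contraction} with Banach's fixed point theorem on the complete metric space $(\Pc_2(\R^d),\Wc_2)$. By \Cref{remark:sde_evolution_well_defined}, for each $t\geq 0$ the map $P_t:\Pc_2(\R^d)\to\Pc_2(\R^d)$, $\mu\mapsto\mu_t$, is well-defined; pathwise uniqueness of strong solutions together with the Markov property of the Itô diffusion \eqref{eq:sde} yields the semigroup identity $P_{t+s}=P_t\circ P_s$ for all $s,t\geq 0$. Recall moreover that $(\Pc_2(\R^d),\Wc_2)$ is a complete metric space \cite{villani2009optimal}.

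First I would fix some $t_0>0$. By \Cref{lemma:contraction}, $\Wc_2(P_{t_0}\mu,P_{t_0}\nu)\leq e^{-mt_0}\Wc_2(\mu,\nu)$ for all $\mu,\nu\in\Pc_2(\R^d)$, so $P_{t_0}$ is a strict contraction with Lipschitz constant $e^{-mt_0}<1$. Banach's fixed point theorem then provides a unique $\mu_\infty\in\Pc_2(\R^d)$ with $P_{t_0}\mu_\infty=\mu_\infty$. Next I would upgrade this to invariance under the whole family $(P_t)_{t\geq 0}$: for arbitrary $t\geq 0$ the semigroup property gives $P_{t_0}(P_t\mu_\infty)=P_t(P_{t_0}\mu_\infty)=P_t\mu_\infty$, so $P_t\mu_\infty$ is also a fixed point of $P_{t_0}$, and by uniqueness of the latter $P_t\mu_\infty=\mu_\infty$. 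Hence $\mu_\infty$ is stationary, and in particular does not depend on the auxiliary choice of $t_0$.

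The claimed decay estimate is then immediate: for any $\mu_0\in\Pc_2(\R^d)$, applying \Cref{lemma:contraction} with the second solution started in $\mu_\infty$ gives $\Wc_2^2(\mu_t,\mu_\infty)=\Wc_2^2(P_t\mu_0,P_t\mu_\infty)\leq e^{-2mt}\Wc_2^2(\mu_0,\mu_\infty)$. Uniqueness of the stationary distribution within $\Pc_2(\R^d)$ follows along the same lines: any stationary $\nu\in\Pc_2(\R^d)$ satisfies $\Wc_2^2(\nu,\mu_\infty)=\Wc_2^2(P_t\nu,P_t\mu_\infty)\leq e^{-2mt}\Wc_2^2(\nu,\mu_\infty)$ for all $t\geq 0$, and letting $t\to\infty$ forces $\nu=\mu_\infty$.

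The routine parts here (Banach's theorem, the final estimates, completeness of $\Wc_2$) are straightforward. The one point that genuinely needs care is the semigroup/Markov identity $P_{t+s}=P_t\circ P_s$: namely that evolving \eqref{eq:sde} for time $s$ and then restarting from the (random) endpoint $X_s$ reproduces the law of $X_{t+s}$. This is where I would spend most of the effort, invoking the flow property of strong solutions established in \Cref{thm:Ex_Uniq_SDE}, the uniqueness-in-law statements of \Cref{remark:sde_evolution_well_defined}, and the Markov property of Itô diffusions with the time-homogeneous, linearly growing drift $-\partial U$.
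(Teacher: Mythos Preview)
Your proposal is correct and follows essentially the same route as the paper: apply Banach's fixed point theorem to the time-$t_0$ map on $(\Pc_2(\R^d),\Wc_2)$ using the contraction from \Cref{lemma:contraction}, then use commutativity of the flow maps to upgrade the fixed point to full stationarity, and read off the exponential decay and uniqueness from the contraction estimate. Your identification of the semigroup identity $P_{t+s}=P_t\circ P_s$ as the one point requiring care matches the paper, which likewise attributes the commutativity $\Phi_{T_1}\circ\Phi_{T_2}=\Phi_{T_2}\circ\Phi_{T_1}$ to uniqueness of strong solutions without spelling out further details.
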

\begin{proof}
    The proof relies on an application of Banach's fixed point theorem together with methods from the theory of ergodic semigroups of operators. The uniqueness of a stationary distribution is a direct consequence of the contraction property shown in \Cref{lemma:contraction}.    
    To prove existence let us first define for $T>0$, $\Phi_T: \mathcal{P}_2(\R^d) \rightarrow \mathcal{P}_2(\R^d)$ with
    \begin{equation*}
        \begin{aligned}
            \mu \mapsto \mu_T = \text{Law}(X_T),\quad            \text{where }\begin{cases}
                (X_t)_{t \geq 0} \text{ satisfies \eqref{eq:sde}}\\
                \phantom{(} X_0 \sim \mu.
            \end{cases}
        \end{aligned}
    \end{equation*}
    which is well-defined by \Cref{remark:sde_evolution_well_defined}. By \Cref{lemma:contraction} $\Phi_T$ is a contraction with respect to the Wasserstein-2 distance $\Wc_2$. Since $(\mathcal{P}_2(\R^d),\Wc_2)$ is a complete metric space, Banach's fixed point theorem  implies that $\Phi_T$ admits a unique fixed point $\hat{\mu}_T$. This fixed point has to be a stationary measure: Note that for any two $T_1,T_2$ we find $\Phi_{T_1}(\Phi_{T_2}(\hat{\mu}_{T_1}) = \Phi_{T_2}(\hat{\mu}_{T_1})$ since the $\Phi_{T_i}$ commute. Thus, $\Phi_{T_2}(\hat{\mu}_{T_1})$ is a fixed point for $\Phi_{T_1}$ and by uniqueness of the fixed point we have $\Phi_{T_2}(\hat{\mu}_{T_1}) = \hat{\mu}_{T_1}$. This means $\hat{\mu}_{T_1}$ is a fixed point of $\Phi_{T_2}$, ergo, $\hat{\mu}_{T_1} = \hat{\mu}_{T_2}$. Therefore, there exists exactly one measure $\mu_\infty$ which is a fixed point for all $\Phi_T$, $T>0$ and thus a stationary measure. Lastly, convergence to the stationary measure from any initial distribution $\mu_0$ follows from the contraction property of the SDE.
\end{proof}

\subsection{Identifying the stationary distribution}
\label{sec:identifying_stationary_dist}
In \Cref{sec:Ex_unique,sec:Ergodicity} we have seen that the SDE \eqref{eq:sde} admits a unique strong solution $(X_t)_{t \geq 0}$ and we have established the Wasserstein-$2$ convergence $\mu_t \rightarrow \mu_{\infty}$ as $t\rightarrow\infty$ with stationary distribution $\mu_\infty$. We are left to show that this stationary distribution is, in fact, the target distribution $\pi(x)$.
\begin{theorem}
    \label{thm:identifying_stat_measure}
    The stationary distribution of the SDE \eqref{eq:sde} is $\pi(x) = \frac{e^{-U(x)}}{Z}$.
\end{theorem}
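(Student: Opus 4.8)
The plan is to identify $\mu_\infty$ through the \emph{stationary} weak Fokker--Planck equation, exploiting that \Cref{thm:fixedpoint_SDE} already provides a unique stationary distribution. First I would record that $\mu_\infty$ is invariant for the semigroup associated with \eqref{eq:sde}: by \Cref{thm:fixedpoint_SDE}, $\Phi_T(\mu_\infty)=\mu_\infty$ for every $T>0$, i.e.\ $X_0\sim\mu_\infty$ implies $X_t\sim\mu_\infty$ for all $t\ge0$. Inserting $\mu_0=\mu_\infty$ into \Cref{prop:weak_FKP} with an arbitrary $\phi\in C_c^2(\R^d)$ (the integrability hypothesis there is automatic, $\nabla\phi$ being bounded) makes the left-hand side of \eqref{eq:FKP1} vanish while $\mu_s\equiv\mu_\infty$ on the right, so dividing by $h$ gives the stationary identity
\[
\int_{\R^d}\bigl(\langle-\partial U(x),\nabla\phi(x)\rangle+\Delta\phi(x)\bigr)\,\d\mu_\infty(x)=0\qquad\text{for all }\phi\in C_c^2(\R^d).
\]
Here I would also note that $\mu_\infty=\mathrm{Law}(X_t)$ for any $t>0$ is absolutely continuous (the non-degenerate diffusion regularizes instantaneously; equivalently, solutions of the Fokker--Planck equation with elliptic diffusion admit densities), so the Lebesgue-null ambiguity in the selection $\partial U$ does not affect the integral.

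The second step is to verify that $\pi$ solves the same equation. Since $U\in H^1_{\mathrm{loc}}(\R^d)$ (\Cref{rem:Ass_A}) is bounded below and $s\mapsto e^{-s}$ is smooth with bounded derivative on $[\inf U,\infty)$, the Sobolev chain rule gives $e^{-U}\in W^{1,1}_{\mathrm{loc}}(\R^d)$ with $\nabla e^{-U}=-e^{-U}\nabla U$, where $\nabla U$ agrees a.e.\ with the Alexandrov derivative; the linear growth of $\partial U$ together with boundedness of $e^{-U}$ makes $e^{-U}\nabla U\in L^1_{\mathrm{loc}}$. For $\phi\in C_c^2(\R^d)$, integrating by parts (no boundary term) yields $\int\Delta\phi\,e^{-U}\,\d x=-\int\langle\nabla\phi,\nabla e^{-U}\rangle\,\d x=\int\langle\nabla\phi,\nabla U\rangle e^{-U}\,\d x$, so $\int(\langle-\partial U,\nabla\phi\rangle+\Delta\phi)\,e^{-U}\,\d x=0$; dividing by $Z$ shows $\pi$ is a stationary solution as well.

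Finally I would invoke uniqueness: $\pi$ is the only probability measure solving the stationary weak Fokker--Planck equation, hence $\mu_\infty=\pi$. This is the crux, and it is exactly where the linear growth of $\partial U$ is used --- it makes the drift locally bounded (indeed in $L^p_{\mathrm{loc}}$ for every $p$), the diffusion is the non-degenerate identity, and $V(x)=\|x\|^2$ is a Lyapunov function since, with $U$ normalized to be minimized at the origin, $m$-strong convexity yields $\langle-\partial U(x),\nabla V(x)\rangle+\Delta V(x)\le-2m\|x\|^2+2d$. Under these hypotheses, uniqueness of the invariant probabilistic solution follows from the theory of Fokker--Planck--Kolmogorov equations. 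The mechanism, which one could also carry out directly, is that once $\mu_\infty$ has a smooth positive density $p_\infty$, the substitution $p_\infty=e^{-U}q$ turns the stationary equation into $\dive(e^{-U}\nabla q)=0$; testing against $q$ --- justified because $U(x)\ge\frac{m}{2}\|x\|^2+\mathrm{const}$ forces Gaussian-type decay of $e^{-U}$ and of $p_\infty$ --- gives $\int_{\R^d}e^{-U}\|\nabla q\|^2\,\d x=0$, so $q$ is constant and $\mu_\infty=\pi$. The main obstacle is making this uniqueness step rigorous for a merely convex, non-differentiable $U$, where $\partial U$ is only defined almost everywhere and is not Lipschitz, so that classical smooth-coefficient arguments do not apply and one must appeal to low-regularity Fokker--Planck theory (or approximate $U$ by smooth strongly convex potentials).
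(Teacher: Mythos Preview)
Your proposal is correct and follows essentially the same route as the paper: derive the stationary weak Fokker--Planck equation for $\mu_\infty$ from \Cref{prop:weak_FKP}, verify via the weak chain rule that $\pi$ solves it too, and conclude by uniqueness using the Lyapunov function $V(x)=\tfrac12\|x\|^2$ together with $\partial U\in L^p_{\mathrm{loc}}$ for all $p$. The obstacle you flag at the end --- making uniqueness rigorous under low drift regularity --- is resolved in the paper by citing a specific result from the Fokker--Planck--Kolmogorov literature (Example~5.1 of \cite{bogachev2002uniqueness}), which requires precisely the Lyapunov condition and $\partial U\in L^p_{\mathrm{loc}}$ for some $p>d$ that you identified; your quotient argument $p_\infty=e^{-U}q$ is a heuristic for the same fact but, as you note, would need additional work to justify directly.
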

\begin{proof}
Let $(X_t)_{t \geq 0}$ solve the SDE \eqref{eq:sde} initialized at its unique stationary distribution denoted as $\mu_\infty$. Then by \Cref{prop:weak_FKP}, $(X_t)_{t \geq 0}$ satisfies for any $\phi\in C^\infty_c(\R^d)$ and all $t,h>0$
\begin{equation*}
    \begin{aligned}
        \int\limits_{t}^{t+h} \Ex\left[\langle -\nabla U(X_s), \nabla \phi(X_s)\rangle + \Delta \phi(X_s)\right]\; \d s  = 0.
    \end{aligned}
\end{equation*}
This, however, can only be true for all $t,h>0$ if the integrand is zero. We thus find that the stationary distribution $\mu_\infty$ satisfies the elliptic weak PDE
\begin{equation}
\label{eq:weak_stat_PDE}
    \begin{aligned}
        \int\limits_{\R^d} \langle -\nabla U(x) , \nabla \phi(x)\rangle  + \Delta \phi(x)\; \d \mu_\infty(x)  = 0.
    \end{aligned}
\end{equation}
Plugging in the target density $\d\pi(x)=\frac{e^{-U(x)}}{Z}\d x$ instead of $\mu_\infty$ and using the weak chain rule,
one can easily check that $\pi(x)$ is a solution to \eqref{eq:weak_stat_PDE} as well. By Example 5.1 from \cite{bogachev2002uniqueness}, this PDE admits at most one solution in the space of Borel probability measures, provided $\nabla U \in L^p_\text{loc}(\R^d)$ for a $p>d$ and if there exist $\alpha>0$ and $K\subset \R^d$ compact such that
\begin{equation}
\label{eq:uniqueness_V(x)}
    -\langle \nabla U(x),  x\rangle + d \leq \frac{\alpha}{2} \lVert x \rVert^2\quad \text{for all $x\in\R^d\setminus K$}.
\end{equation}
The above condition on $\nabla U$ is trivially satisfied by the linear growth assumption (see \Cref{Ass:Potentials}). Thus, by uniqueness $\mu_\infty = \pi$ concluding the proof.
\end{proof}

\section{Discretizations of the Langevin Dynamics}\label{sec:discrete_analysis}
\label{Sec:Discretization_algorithms}
Having analyzed the continuous-time Langevin dynamics governed by $U(x)$, the current section is devoted to providing convergent discretization schemes which enable us to sample from the target density. We will consider two different discretization schemes, namely, a fully explicit Euler-Maruyama scheme in \Cref{sec:explicit_scheme} as well as a semi-implicit scheme in \Cref{sec:semi_implicite_scheme}. The semi-implicit scheme is motivated by literature on optimization and ODEs where such schemes are known to be beneficial in terms of stability and/or convergence speed (see, e.g.,~\cite{beck2009fast}). They have also turned out to be particularly useful when dealing with non-globally Lipschitz forces $\nabla U$~\cite{kelly2022adaptive}. In addition, while the subgradient of a convex function is potentially multi-valued, its proximal mapping is unique so that implicit schemes alleviate the necessity of \emph{choosing a subgradient}. 

The discretizations considered in the following will rely on subdifferential calculus. Since $U$ is differentiable a.e., any subgradient selection is also a weak gradient so that the preceding time-continuous analysis still holds true. Our analysis shows that both Markov chains resulting from the respective discretizations are ergodic with respect to the Wasserstein-2 metric. Moreover, as a byproduct of \Cref{prop:explicit_scheme_ergodic}, for the explicit scheme\footnote{Ergodicity in total variation could also be obtained for the semi-implicit scheme by adopting the proof strategy from the explicit scheme (cf.~\cite{habring2025diffusion}). The techniques presented in this paper, however, yield explicit convergence rates and bounds on the involved constants.} we also obtain geometric convergence to the stationary distribution in total variation distance, which is more commonly used in the Markov chain literature.

\subsection{Explicit Euler-Maruyama scheme}\label{sec:explicit_scheme}
The explicit Euler-Maruyama scheme for \eqref{eq:sde} with step-size $\tau>0$ reads as
\begin{equation}
\label{eq:explicit_scheme}
    X^\tau_{k+1} = X^\tau_k -\tau \theta(X_k^\tau) + \sqrt{2\tau}W_k.
\end{equation}
with $\theta:\R^d\rightarrow \R^d$ a measurable subgradient selection\footnote{A measurable selection of subgradients exists: $U$ is almost everywhere differentiable so that only on points of non-differentiability a subgradient has to be chosen. Choosing an arbitrary subgradient yields a measurable function by completeness of the Lebesgue measure.}, i.e., $\theta$ is measurable and $\theta(x)\in \partial U(x)$ for every $x\in \R^d$.
We denote the law of $X^\tau_k$ as $\mu^\tau_k$. We begin our theoretical analysis by showing geometric ergodicity of the resulting Markov chain for fixed $\tau$. 
The proof relies on~\cite{hairer2011yet} for which we have to show a Lyapunov drift and a minorization condition (\Cref{lemma:lyapunov,lemma:minor}). 
\begin{lemma}[Lyapunov drift condition]\label{lemma:lyapunov}
    For $f(x)=\|x\|^2$, there exists $\eta>0$ independent of $\tau$ such that the discrete scheme satisfies for $\tau$ small enough the Lyapunov drift condition 
    \begin{equation*}         
        \begin{aligned}
            \Delta f(x) \coloneqq \mathbb{E} \left[ f(X_1^\tau) \middle| X_0^\tau = x \right] - f(x)
            \leq -m\tau f(x) + \eta\tau,\quad x\in\R^d
        \end{aligned}   
    \end{equation*}
\end{lemma}
\begin{proof}
    First note that by strong convexity of $U$ and since $U$ is minimal at $x=0$ (\Cref{rem:Ass_A}), $\langle\theta(x),x\rangle\geq m\|x\|^2$. Further, by linear growth of $\theta(x)$, there exist $K,c\geq 0$ such that for $\|x\|\geq K$, $\|\theta(x)\|^2\leq c\|x\|^2$. Let $C = \overline{B_{K}(0)}$ be the closed ball with center $0$ and radius $K$. Further, let $x \in C^c = \R^d \setminus C$. Then we have
    \begin{equation}\label{eq:lyapunov1}
        \begin{aligned}
            \|x-\tau\theta(x)\|^2 = \|x\|^2 - 2\tau\langle x,\theta(x)\rangle + \tau^2\|\theta(x)\|^2
            \leq & \|x\|^2 - 2m\tau\|x\|^2 + \tau^2c\|x\|^2\\
            \leq &\|x\|^2 \left( 1 - \tau \left(2m-\tau c \right)\right)\\
            \leq &\|x\|^2 \left( 1 - \tau m\right)
        \end{aligned}
    \end{equation}
    where the last inequality holds for $\tau<\frac{m}{c}$. Thus, for $x\in C^c$ we have
    \begin{equation*}         
        \begin{aligned}
            \mathbb{E} \left[ f(X_1^\tau) \middle| X_0^\tau = x \right]
            = \mathbb{E} \left[ \|x-\tau\theta(x) + \sqrt{2\tau} W_1\|^2 \middle| X_0^\tau = x \right]
            = &\|x-\tau\theta(x)\|^2 + 2\tau \\
            \leq &(1-m\tau)f(x) + 2\tau.
        \end{aligned}   
    \end{equation*}
    On the other hand, for $x\in C$ we find, since $C$ is bounded
    \begin{equation*}         
        \begin{aligned}
            \mathbb{E} \left[ f(X_1^\tau) \middle| X_0^\tau = x \right]
            = \mathbb{E} \bigg[\|x-\tau\theta(x) + \sqrt{2\tau} W_1\|^2 \bigg| &X_0^\tau = x \bigg]
            \leq  \|x\|^2 + c(K)\tau \\
            \leq & (1-m\tau)\|x\|^2 + m\tau \|x\|^2 + c(K)\tau \\
            \leq & (1-m\tau)f(x) + c \tau.
        \end{aligned}   
    \end{equation*}
\end{proof}

\begin{remark}
    While we made use of the strong convexity to prove the drift condition, we could reduce this requirement to assuming that $U$ is dissipative, that is, 
    \[
        \langle\theta(x),x\rangle\geq \alpha\|x\|^2 + b
    \]
    with $\alpha>0$, $b\in\R$. Convexity will be relevant only later to control the bias to the target distribution $\pi$.
    Assuming dissipativity, we can simply replace the estimate from \eqref{eq:lyapunov1} by
    \begin{equation*}
        \begin{aligned}
            \|x-\tau\theta(x)\|^2 = \|x\|^2 - 2\tau\langle x,\theta(x)\rangle + \tau^2\|\theta(x)\|^2
            \leq & \|x\|^2 - 2\tau\alpha \|x\|^2 + \tau^2 c\|x\|^2 -2\tau b\\
            \leq & \|x\|^2 - 2\tau\alpha\|x\|^2(1-\tau \frac{c}{\alpha}) - 2\tau b.
        \end{aligned}
    \end{equation*}
\end{remark}

\begin{lemma}[Minorization condition]\label{lemma:minor}
    Let $f(x) = \|x\|^2$ and $C=\{x\in\R^d\;|\; f(x)\leq R\}$. Then the following minorization condition holds,
    \begin{equation}\label{eq:minor}
        \inf_{x\in C}\P[X_1^\tau\in A\;|\; X_0^\tau=x]\geq \alpha(\tau) \nu_\tau(A),\quad A\in \Bc(\R^d)
    \end{equation}
    where $\nu_\tau$ denotes an isotropic Gaussian distribution on $\R^d$ with mean zero and covariance matrix $\tau I$, and $\alpha(\tau) = \frac{1}{2^{d/2}}e^{-\frac{a(R+1)}{2\tau}}$ where $a$ is a constant independent of $\tau$.
    In particular, we can choose $R>2\eta/m$ with $\eta$ from~\Cref{lemma:lyapunov} as required in~\cite[Assumption 2]{hairer2011yet}.
\end{lemma}
\begin{remark}
    We call a set $C$ such that~\eqref{eq:minor} holds $\nu_\tau$-small.
\end{remark}
\begin{proof}
    The proof is almost identical to the smooth case and relies only on local boundedness of the drift. Let $x\in C$ be arbitrary, i.e., $\|x\|^2\leq R$. We find
    \begin{equation}
        \begin{aligned}
            \P[X_1^\tau\in A\;|\; X_0^\tau=x]
             =& \int_A \frac{1}{(2\pi 2\tau)^{d/2}}e^{-\frac{\|x-\tau\theta(x) - z\|^2}{4\tau}}\d z\\
             \geq& \int_A \frac{1}{(2\pi 2\tau)^{d/2}}e^{-\frac{2\|x-\tau\theta(x)\|^2 + 2\|z\|^2}{4\tau}}\d z\\
             \geq& \frac{1}{2^{d/2}}e^{-\frac{\|x-\tau\theta(x)\|^2}{2\tau}}\int_A \frac{1}{(2\pi \tau)^{d/2}}e^{-\frac{\|z\|^2}{2\tau}}\d z.
        \end{aligned}
    \end{equation}
    By linear growth of the subgradient there exists $a$ independent\footnote{since $\tau$ is naturally bounded from above} of $\tau$ such that we have $\|x-\tau\theta(x)\|^2\leq a(R+1)$ yielding the desired result.
\end{proof}
To state the ergodicity result, let us introduce for $V:\R^d\rightarrow [1,\infty)$ the weighted TV norm
\begin{equation}
    \|\mu\|_V = \sup_{g\leq V}\int g(x)\d \mu(x) = \int V(x)\d |\mu|(x)
\end{equation}
for $\mu$ any Radon-measure on $\Bc(\R^d)$ with finite $V$-moment\footnote{For a short proof of the equality of the two representations of $\|\cdot\|_V$ we refer to the supplement.}.
\begin{theorem}\label{prop:explicit_scheme_ergodic}
    Let $\alpha(\tau),R$ be the constants from~\Cref{lemma:minor}, $\beta=\alpha(\tau)/(2\eta \tau)$, $\gamma_0 \in (1-m\tau + 2\eta\tau/R, 1)$, and denote 
    \[
        \overline{\alpha}=\max\left\{1-\alpha(\tau)/2, \frac{2+R\beta \gamma_0}{2+R\beta} \right\}.
    \]
    Then, for $\tau>0$ sufficiently small, $(X_k^\tau)_k$ is geometrically ergodic with respect to the norm $\|\;\cdot\; \|_{V}$ with $V(x)=1+\beta \|x\|^2$, that is, there exists a measure $\pi^\tau\in \Pc^2(\R^d)$ such that $\|\mu_k^\tau-\pi^\tau\|_{V} \leq \overline{\alpha}^k\|\mu^\tau_0 - \pi^\tau\|_{V}$.
    As a consequence it follows also that
    \begin{equation}
        \Wc_2^2( \mu_k^\tau, \pi^\tau) \leq \frac{2\overline{\alpha}^k}{\beta} \|\mu^\tau_0 - \pi^\tau\|_{V}.
    \end{equation}
    
\end{theorem}
\begin{remark}
    The behavior of the convergence rate as a function of $\tau$ is worse for $\tau\rightarrow 0$ for the explicit scheme compared to the semi-implicit one (cf.~\Cref{prop:semi_implicit_fixed_point}) and to results using higher regularity of $U$ (cf.~\cite{durmus2021uniform}). This is a consequence of the dependence of $\alpha(\tau)$ in the minorization condition on $\tau$. Under higher regularity it is possible to derive a uniform minorization condition leading to convergence rates as in the semi-implicit scheme, cf.~\cite{durmus2021uniform,bou2013nonasymptotic}.
\end{remark}
\begin{proof}
    By~\cite[Theorems 1.3, 3.2]{hairer2011yet}, $(X_k^\tau)_k$ admits a unique stationary measure in $\Pc_2(\R^d)$, denoted as $\pi^\tau$ and we have
    \begin{equation}
        \|\mu^\tau_k - \pi^\tau\|_{V} \leq \overline{\alpha}^k \|\mu^\tau_0 - \pi^\tau\|_{V}.
    \end{equation}
    The choice of the constants follows from ~\cite[Theorems 1.3]{hairer2011yet} together with~\Cref{lemma:lyapunov,lemma:minor}.
    Moreover, by Theorem 6.15 in \cite{villani2009optimal}, 
    \[
    \Wc_2^2( \mu_k^\tau, \pi^\tau)
    \leq 2 \int_{\R^d} \|x\|^2 \d \left|  \mu_k^\tau(x) - \pi^\tau(x) \right|
    \leq \frac{2}{\beta} \|\mu_k^\tau - \pi^\tau\|_{V}
    \]
    yielding the convergence in the Wasserstein distance.
\end{proof}
A crucial feature of MCMC methods are \emph{law of large numbers} (LLN) results. For a function $f:\R^d\rightarrow \R$ and a Markov chain $(X_k)_{k \in \N}$ we introduce the notation
\[S_n(f)=\sum\limits_{k=0}^n f(X_k)\]
for the Ces\`aro mean of the function values.
Given that the Markov chain $(X_k)_{k \in \N}$ is ergodic and converges to a distribution $\mu_\infty$, we say that LLN holds if $\frac{1}{n}S_n(f)\rightarrow \mu_\infty(f) = \int f\;\d \mu_\infty$ as $n\rightarrow \infty$. Without LLN, in order to approximate statistics of $\mu_\infty$, it is necessary to simulate multiple independent Markov chains for sufficiently many steps and compute averages across all chains. 
In contrast, with LLN it is sufficient to run a single chain and compute running means across consecutive iterates. While this does not directly imply reduced computation time, it is more convenient to implement and memory efficient, as we may successively update the sample average without storing all iterates. The following result shows that the explicit scheme not only satisfies LLN, but also provides a quantification of the convergence that is in line with the rate obtained from a central limit theorem.
\begin{theorem}
\label{prop:LLN_explicit}
    For every $f\in L^1(\pi^\tau)$ and the Markov chain $(X_k^\tau)_{k \in \N}$ initialized at $X_0^\tau=x_0$ for almost any $x_0\in\R^d$ it holds true that
    \begin{equation*}
    \lim\limits_{n\rightarrow \infty}\frac{1}{n}S_n(f)=\pi^\tau(f)\quad\text{a.s.}
    \end{equation*}
    If, in addition, $f$ satisfies $\sup_{x\in\R^d} \left|f(x)-\pi^\tau(f)\right|\;(1+\|x\|^2)^{-\frac{1}{2}}<\infty$ then 
    \[
    \E \left[ \left|\frac{1}{n}S_n(f)-\pi^\tau(f) \right|^2 \right] = \Oc \left(\frac{1}{n} \right).
    \]
\end{theorem}

\begin{remark}
    \label{rem:thm_LLN_EM}
    We refer to~\cite[Theorem 3.1]{latuszynski2013nonasymptotiv} for an explicit formula for the constant in the $\mathcal{O}$-term. We emphasize that the constant in~\Cref{prop:LLN_explicit} depends on properties of the function 
   $f$ and on the step size $\tau$ in a non-explicit manner. As a consequence, the precise bias–variance trade-off with respect to $\tau$, for a fixed number of samples $n$, cannot be quantified within our current analysis. One expects the usual qualitative behavior: decreasing $\tau$
   reduces the discretization bias but deteriorates the constant in the variance estimate. Making this trade-off explicit appears to be beyond the scope of this article and is not addressed here.
\end{remark}

\begin{proof}
    We verify the conditions for the application of Theorem 17.0.1 from \cite{meyn2012markov}: We have already shown that the chain admits a stationary distribution; thus we are left to show that the chain is positive Harris. The function $V(x)=1+\|x\|^2$ is unbounded off petite sets \cite[Section 8.4.2]{meyn2012markov}, as the sublevel sets $C_V(n)=\{y\;|\;V(y)\leq n\}$ are small as in~\Cref{lemma:minor} which implies petiteness \cite[Proposition 5.5.3]{meyn2012markov}. Moreover, $V(x)$ is positive and by \Cref{prop:explicit_scheme_ergodic}, in particular we have $\Delta V(x)\leq 0,\quad x\in C^c$ for a petite set $C$. The Markov chain is therefore Harris \cite[Theorem 9.1.8]{meyn2012markov}. Irreducibility follows directly from the additive Gaussian noise. Altogether the first part of the theorem follows. The additional part is a direct consequence of Theorems 3.1 and 4.2 and Proposition 4.5 in \cite{latuszynski2013nonasymptotiv}. 
\end{proof}
So far we have proven that the explicit scheme is ergodic and that LLN holds. Recall, however, that we are interested in sampling from the distribution $\pi(x)\sim e^{-U(x)}$. Thus, what is left to analyze is the relation between the stationary distribution $\pi^\tau$ of the explicit scheme and the target distribution $\pi$ for which we need the following lemma.
\begin{lemma}
\label{lem:bounded_second_moments_explicit}
If the second moment of $X_0^\tau$ is finite, then the second moments of $(X_k^\tau)_{\tau,k \in \N}$ are uniformly bounded, that is, there exists $\tau_{\text{max}}>0$ such that
\[
\sup_{\tau \in [0,\tau_{\text{max}}]} \sup_{k \in \N} \mathbb{E} \left[ \|X_k^\tau\|^2 \right] < \infty.
\]
\end{lemma}
\begin{proof}
Let $K>0$ be as in \eqref{eq:lyapunov1} such that for $\|x\|\geq K$ we have  $\|x-\tau\theta(x)\|^2\leq\|x\|^2(1-m\tau)$. By the linear growth of $\theta(x)$, for $\|x\|\leq K$ it holds that $\|x-\tau\theta(x)\|\leq K+\tau c(K+1)\eqqcolon M_\tau$. Distinguishing the cases $\|X_k^\tau\|\leq K$ and $\|X_k^\tau\|> K$ we find
\begin{equation*}
    \begin{aligned}
        \|X_k^\tau\|^2 =&
        \|X_{k-1}^\tau - \tau\theta(X_{k-1}^\tau)\|^2 + 2\tau\|W_{k-1}\|^2 + 2\langle X_{k-1}^\tau-\tau\theta(X_{k-1}^\tau),\sqrt{2\tau}W_{k-1}\rangle\\
        \leq& \1_{\{\|X_{k-1}^\tau\|>K\}}(1-m\tau)\|X_{k-1}^\tau\|^2 + \1_{\{\|X_{k-1}^\tau\|\leq K\}}M_\tau^2\\
        &+ 2\tau\|W_{k-1}\|^2 + 2\langle X_{k-1}^\tau-\tau\theta(X_{k-1}^\tau),\sqrt{2\tau}W_{k-1}\rangle.
    \end{aligned}
\end{equation*}
Iterating the case distinction and denoting $R_k = 2\langle X_{k}^\tau-\tau\theta(X_{k}^\tau),\sqrt{2\tau}W_{k}\rangle$ we find
\begin{equation}\label{eq:bounded_expl}
    \begin{aligned}
        \|X_k^\tau\|^2&\leq \1_{\{\|X_n^\tau\|>K\; n=0,1,\dots,k-1\}}(1-m\tau)^k\|X_0^\tau\|^2\\
        &+ \sum\limits_{n=0}^{k-1} \left\{(1-m\tau)^n\left[ 2\tau \|W_{k-1-n}\|^2 + R_{k-1-n}\right]\vphantom{\prod\limits_{\ell=n+1}^{k-1}} + M_\tau^2\1_{\|X_n^\tau\|\leq K}\prod\limits_{\ell=n+1}^{k-1}\1_{\|X_\ell\|> K}\right\}.
    \end{aligned}
\end{equation}
By independence of $X_k$ and $W_k$ for all $k$ we have $\E[R_k]=0$. Moreover, 
\begin{equation*}
    \begin{aligned}
        \E\bigg[\sum_{n=0}^{k-1}\1_{\|X_n^{\tau}\|\leq K}&\prod_{\ell=n+1}^{k-1}\1_{\|X_\ell^{\tau}\|> K}\bigg]\\
        =&\sum_{n=0}^{k-1}\P\big[\text{$\|X_n^{\tau}\|\leq K$, and for $\ell=n+1\dots,{k-1}$: $\|X_\ell^{\tau}\|> K$}\big]\\
        \leq& 1
    \end{aligned}
\end{equation*}
where the last inequality follows since the events are disjoint. Thus, we can take the expected value in~\eqref{eq:bounded_expl} and obtain
\[\E[\|X_k^\tau\|^2]\leq (1-m\tau)^k\E[\|X_0^\tau\|^2] + M_\tau^2 + \sum\limits_{n=0}^{k-1} 2\tau (1-m\tau)^n \leq \E[\|X_0^\tau\|^2] + M_\tau^2 + \frac{2}{m}\]
\end{proof}
\begin{theorem}\label{prop:explicit_convergence_of_stationary}
    The invariant distributions $\pi^\tau$ satisfy $\Wc_2^2(\pi^\tau,\pi)\leq c\tau$ for some $c>0$. In particular, $\Wc_2^2(\pi^\tau,\pi)\rightarrow 0$ as $\tau\rightarrow 0$.
\end{theorem}
\begin{proof}
    In the following we denote $\Ec(\mu)=\int U(x)\;\d \mu(x)$, $\Hc(\mu) = \KL (\mu|\text{Leb})$ with Leb the Lebesgue measure and $\Fc(\mu)=\Ec(\mu)+\Hc(\mu)$. Using strong convexity of $U(x)$ we can compute
    \begin{equation*}
        \begin{aligned}
            \|x-\tau\theta(x)-y\|^2 &= \|x-y\|^2 + 2\tau\langle y-x,\theta(x)\rangle + \tau^2\|\theta(x)\|^2\\
            &\leq \|x-y\|^2 + 2\tau\left(U(y) - U(x) - \frac{m}{2}\|x-y\|^2\right)+ \tau^2\|\theta(x)\|^2\\
            & = (1-m\tau)\|x-y\|^2+\tau^2\|\theta(x)\|^2 + 2\tau\left(U(y) - U(x)\right).
        \end{aligned}
    \end{equation*}
    Plugging in an optimal coupling $(X,Y)$ for $\mu^\tau_{k+1}$ and the target $\pi$ and computing the expectation leads to $2\tau(\Ec(\mu^\tau_{k+1})-\Ec(\pi))\leq (1-m\tau)\Wc_2^2(\mu^\tau_{k+1},\pi) - \Wc_2^2((\mu^\tau_{k+1})^+,\pi) + c\tau^2$ where $c$ is an absolute constant emerging from the linear growth of $\theta(x)$ and the boundedness of the second moments of $(\mu_k^\tau)_{k \in \N}$ and $\mu^+$ denotes the distribution of $X-\tau\theta(X)$ for $X\sim\mu$. As in Proposition 27 from \cite{durmus2019analysis} it follows
    \begin{equation*}
        \begin{aligned}
            2\tau(\Fc(\mu_{k+1}^\tau)-\Fc(\pi))\leq \Wc_2^2((\mu_k^\tau)^+,\pi) -m\tau\Wc_2^2(\mu_{k+1}^\tau,\pi) - \Wc_2^2((\mu_{k+1}^\tau)^+,\pi) + c\tau^2.
        \end{aligned}
    \end{equation*}
    Since $\Fc(\mu_{k+1}^\tau)-\Fc(\pi) = \KL (\mu_{k+1}^\tau|\pi)\geq 0$ \cite[Lemma 1]{durmus2019analysis}, 
    summing over $k$ leads to 
    \begin{equation}\label{eq:convergence_explicit}
        \begin{aligned}
            \frac{m}{n}\sum\limits_{k=0}^{n-1} \Wc_2^2(\mu_{k+1}^\tau,\pi) \leq \frac{\tau^{-1}}{n}\Wc_2^2((\mu_0^\tau)^+,\pi) + c\tau.
        \end{aligned}
    \end{equation}
    We already know that $\mu_{k+1}^\tau\xrightarrow{\Wc_2} \pi^\tau$ and, thus, $\Wc_2^2(\mu_{k+1}^\tau,\pi)\rightarrow \Wc_2^2(\pi^\tau,\pi)$. Taking the limit as $n\rightarrow\infty$ in \eqref{eq:convergence_explicit} then implies $\Wc_2^2(\pi^\tau,\pi)\leq \frac{c}{m}\tau$.
\end{proof}

\begin{remark}
    Contrary to the results from \cite{durmus2019analysis,habring2023subgradient}, in \Cref{prop:explicit_convergence_of_stationary} we do not require $U$ to be Lipschitz continuous (or, equivalently, uniform boundedness of elements in $\partial U$). Moreover, we obtain a convergence result directly for the distributions $(\mu^\tau_k)_{k \in \N}$ whereas \cite{durmus2019analysis} prove convergence for the explicit scheme only for the ergodic means of these distributions.
\end{remark}
\subsection{Semi-implicit Euler-Maruyama scheme}
\label{sec:semi_implicite_scheme}
For this section we will specify a particular class of Potentials $U: \R^d \rightarrow \R$ satisfying \Cref{Ass:Potentials}.
\begin{ass}
\label{Ass:F_G}
    The potential is of the form $U(x)=F(x)+G(x)$ with $F: \R^d \rightarrow \R$ $m$-strongly convex and differentiable with $L$-Lipschitz-continuous gradient and $G: \R^d \rightarrow \R$ convex with subgradients of at most linear growth. Moreover, $G$ admits a minimizer.
\end{ass}
We consider an implicit discretization of the subgradient step with respect to $G$, that is, $X_{k+1}^\tau \in X_k^\tau -\tau (\nabla F(X_k^\tau) + \partial G(X_{k+1}^\tau)) + \sqrt{2\tau}W_k$ or, equivalently,
\begin{equation}\label{eq:semi_implicit_scheme}
    \begin{aligned}
        X_{k+1}^\tau = {\prox}_{\tau G}(X_k^\tau -\tau \nabla F(X_k^\tau)  + \sqrt{2\tau}W_k)
    \end{aligned}
\end{equation}
Our analysis of the semi-implicit scheme, will rely on the following properties of the proximal mapping.
\begin{lemma}
\label{lem:prop_prox}
Let $G: \R^d \rightarrow \R$ be convex and $\tau > 0$. Then, $\prox_{\tau G}:\R^d\rightarrow \R^d$ is $1$-Lipschitz and surjective.
\end{lemma}
\begin{proof}
We provide a short proof of these standard results: Well-definedness of the prox for convex $G$ follows from unique existence of minimizers of~\eqref{eq:defn_prox}.
Regarding Lipschitz-continuity, since $x^* = \prox_{\tau G}(x)$ if and only if $ \tau^{-1}(x-x^*) \in \partial G(x^*)$ and since $\partial G$ is monotone, for $x,y \in \R^d$, it follows that $\tau^{-1}\langle x-x^*-(y-y^*), x^*-y^*\rangle \geq 0 $ implying $\lVert x^* - y^* \rVert \le \lVert x-y\rVert$, which shows that $\prox_{\tau G}$ is $1$-Lipschitz. 
Regarding surjectivity, let $x^*\in\R^d$ be arbitrary and choose $x\in x^*+\tau\partial G(x^*)$. Then, by definition $x^* = \prox_{\tau G}(x)$.
\end{proof}

As in the previous section, we first establish ergodicity and LLN, then convergence to $\pi$.
\begin{theorem}
\label{prop:semi_implicit_fixed_point}
    Let~\Cref{Ass:F_G} hold with $m$ the modulus of strong convexity of $F$ and let $L$ be the Lipschitz constant of $\nabla F$. Then, the proximal-gradient scheme with constant step size $0<\tau<\tfrac{2}{m+L}$ is geometrically ergodic with respect to the Wasserstein-$2$ distance, more precisely, there exists a distribution $\pi^\tau$ on $\R^d$ such that 
    \begin{equation}\label{eq:wasserst_impl}
        \Wc_2(\mu_k^\tau,\pi^\tau)\leq  \left(1-\tau\tfrac{2m L}{m + L} \right)^{\frac{k}{2}} \Wc_2(\mu_0^\tau,\pi^\tau).
    \end{equation}
\end{theorem}
\begin{remark}\
\begin{itemize}
    \item The constant $\Wc_2(\mu_0^\tau,\pi^\tau)$ can be bounded in terms of $L$, $m$ and $d$ by initializing $\mu^\tau_0$ as a Dirac at the minimizer of $U$.
    \item In contrast to the explicit discretization, the semi-implicit scheme yields an improved convergence rate of order $e^{-k\tau\tfrac{m L}{m + L}}$ as a function of $\tau$. In addition, the constant on the right-hand side of~\eqref{eq:wasserst_impl} is solely the initial error without an additional factor.
\end{itemize}   
\end{remark}
\begin{proof}
    Let $\mu,\nu\in \Pc_2(\R^d)$ be two probability distributions and the random variables $X\sim \mu$, $Y\sim\nu$. We define the random variables after one iteration
    \begin{equation*}
        \begin{aligned}
            X^+ &= {\prox}_{\tau G}(X -\tau \nabla F(X)  + \sqrt{2\tau}W_k)\\
            Y^+ &= {\prox}_{\tau G}(Y -\tau \nabla F(Y)  + \sqrt{2\tau}W_k)
        \end{aligned}
    \end{equation*}
    where we emphasize that the steps are coupled, that is, for both transitions we use the same $W_k$. By 1-Lipschitz continuity of the prox, $m$-strong convexity of $F$, and $L$-Lipschitz continuity of $\nabla F$, using \cite[Theorem 2.1.12]{nesterov2013introductory}, we find
    \begin{equation}\label{eq:contraction_implicit}
        \begin{aligned}
            \Wc^2_2(\mu^+,\nu^+) & \leq\E\left[ \|X^+-Y^+\|^2\right] \\ 
            & \leq \E\left[\|X-Y -\tau (\nabla F(X) -\nabla F(Y))\|^2\right]\\
            &= \E\left[\|X-Y\|^2 + \tau^2 \|\nabla F(X)-\nabla F(Y)\|^2 - 2 \tau \langle X-Y,\nabla F(X)-\nabla F(Y)\rangle\right]\\
            &\leq \E\left[\|X-Y\|^2 + \tau^2 \|\nabla F(X)-\nabla F(Y)\|^2 - \tau\frac{2m L}{m + L}\|X-Y\|^2  \right.\\
            & \left.\qquad - \tau\frac{2}{m+L}\|\nabla F(X)-\nabla F(Y)\|^2\right] \\
            &\leq \left (1-\tau\frac{2m L}{m + L} \right) \E\left[\|X-Y\|^2\right] + \tau\left (\tau - \frac{2}{m+L}\right) \E\left[\|\nabla F(X)-\nabla F(Y)\|^2\right],
        \end{aligned}
    \end{equation}
    where the second term is non-positive by the step size constraint. Taking the infimum over all couplings $(X,Y)$ with $X\sim \mu$ and $Y\sim\nu$ on the right-hand side yields a contraction and the rest follows from Banach's fixed point theorem.
\end{proof}
\begin{theorem}
    The semi-implicit scheme with step size $0<\tau<\tfrac{2}{m+L}$ satisfies LLN. That is, for every $f\in L^1(\pi^\tau)$ and for almost all $x_0 \in \R^d$, if the Markov chain $(X_k^\tau)_{k \in \N}$ is initialized at $X_0^\tau=x_0$, it holds true that
    \[
    \lim\limits_{n\rightarrow \infty}\frac{1}{n}S_n(f)=\pi^\tau(f)\quad\text{a.s.}
    \]
    If, in addition, $f$ satisfies $\sup_{x\in\R^d} \left|f(x)-\pi^\tau(f)\right|\;(1+\|x\|^2)^{-\frac{1}{2}}<\infty$ then 
    \[
    \E \left[ \left|\frac{1}{n}S_n(f)-\pi^\tau(f) \right|^2 \right] = \Oc \left(\frac{1}{n} \right).
    \]
\end{theorem}
\begin{proof}
    The proof follows similar steps as in the explicit case, that is, we have to verify the conditions of Theorem 17.0.1 in \cite{meyn2012markov}.
    
    \noindent\textbf{Step 1:} The scheme is irreducible with respect to the Lebesgue measure: Let $A\in\Bc(\R^d)$, 
    \begin{equation*}
        \begin{aligned}
            \P\left[ X_1^\tau\in A \;\middle|\; X_0^\tau=x \right]=&\P\left[ {\prox}_{\tau G}(X_0^\tau - \tau\nabla F(X_0^\tau) + \sqrt{2\tau}W_0)\in A \;\middle|\; X_0^\tau=x \right]\\
            =&\P\left[ (I + \tau\partial G)^{-1}(X_0^\tau - \tau\nabla F(X_0^\tau) + \sqrt{2\tau}W_0)\in A \;\middle|\; X_0^\tau=x \right]\\
            = &\P\left[ X_0^\tau - \tau\nabla F(X_0^\tau) + \sqrt{2\tau}W_0\in (I+\tau\partial G)(A) \;\middle|\; X_0^\tau=x \right]\\
            = &\P\left[ \sqrt{2\tau} W_0 \in (I+\tau\partial G)(A) +\tau \nabla F(x)-x \right].
        \end{aligned}
    \end{equation*}
    Assume that the right-hand side is zero, that is, $B\coloneqq(I+\tau\partial G)(A)$ has Lebesgue measure zero. By~\Cref{lem:prop_prox} ${\prox}_{\tau G}$ is surjective and thus $A\subset {\prox}_{\tau G}({\prox}_{\tau G}^{-1}(A))={\prox}_{\tau G}(B)$. As the proximal mapping is $1$-Lipschitz, it follows that ${\prox}_{\tau G}(B)$ is a Lebesgue zero set \cite[Lemma 7.25]{rudin1987real} and, thus, $A$ is as well. Next we show Harris recurrence for which we need a minorization and a drift condition, cf.~Theorem 9.1.8 in \cite{meyn2012markov}.
    
    \noindent\textbf{Step 2:} Minorization condition: Let $C=\{x\in\R^d\;|\;\|x\|^2\leq R\}$ and $x\in C$. For $A\in\Bc(\R^d)$,
    \begin{equation}
        \begin{aligned}
            \P[X_1^\tau\in A\;|\; X_0^\tau=x]
            =&\P[x-\tau\nabla F(x) + \sqrt{2\tau}W_0\in (\id+ \tau\partial G)(A)]\\
            =& \int\limits_{(\id+ \tau\partial G)(A)} \frac{1}{(2\pi 2\tau)^{d/2}}e^{-\frac{\|x-\tau\nabla F(x) - z\|^2}{4\tau}}\d z\\
            \geq& \int\limits_{(\id+ \tau\partial G)(A)} \frac{1}{(2\pi 2\tau)^{d/2}}e^{-\frac{2\|x-\tau\nabla F(x)\|^2 + 2\|z\|^2}{4\tau}}\d z\\
            \geq& \frac{1}{2^{d/2}}e^{-\frac{\|x-\tau\nabla F(x)\|^2}{2\tau}}\int\limits_{(\id+ \tau\partial G)(A)} \frac{1}{(2\pi \tau)^{d/2}}e^{-\frac{\|z\|^2}{2\tau}}\d z\\
            \geq& \underbrace{\inf_{x\in C}\frac{1}{2^{d/2}}e^{-\frac{\|x-\tau\nabla F(x)\|^2}{2\tau}}}_{\eqqcolon \alpha(\tau)}\int\limits_{(\id+ \tau\partial G)(A)} \frac{1}{(2\pi \tau)^{d/2}}e^{-\frac{\|z\|^2}{2\tau}}\d z\\
            =& \alpha(\tau)\nu_\tau(A)
        \end{aligned}
    \end{equation}
    with $\alpha(\tau)>0$ and $\nu_\tau = \law(\prox_{\tau G}(Z))$ with $Z\sim\Nc(0,\tau)$ yielding the minorization condition.
    
    \noindent\textbf{Step 3:} Drift condition:
    Let us consider $V(x)=1+\|x\|^2$ which is positive and unbounded off petite sets. Note that, since we assume (without loss of generality) that $0$ is a minimizer of $U$ we have $0 = {\prox}_{\tau G}(-\tau \nabla F(0))$.
    Using $m$-strong convexity of $F$, non-expansiveness of the prox, and the same estimate as in~\eqref{eq:contraction_implicit} we find
    \begin{equation}\label{eq:implicite_LLN}
        \begin{aligned}
            \|X_1^\tau\|^2&
            = \|{\prox}_{\tau G}(X_0^\tau - \tau\nabla F(X_0^\tau)+\sqrt{2\tau}W_0) - {\prox}_{\tau G}(-\tau \nabla F(0))\|^2\\
            \leq& \|X_0^\tau - \tau\nabla F(X_0^\tau)+\sqrt{2\tau}W_0 - (0-\tau \nabla F(0))\|^2\\
            \leq& \left(1-\tau\frac{2mL}{m+L} \right)\|X_0^\tau\|^2 + 2\tau\|W_0\|^2
            + 2\langle X_0^\tau - \tau\nabla F(X_0^\tau) + \tau\nabla F(0)), \sqrt{2\tau}W_0\rangle.
        \end{aligned}
    \end{equation}
    Consequently, for $\|x\|\geq K$ with appropriate $K$
    \begin{equation*}
        \begin{aligned}
            \Delta V(x) \coloneqq \E\left[ V(X_1^\tau)\;\middle|\; X_0^\tau=x \right] - V(x)
            & \leq \left(1-\tau\frac{2mL}{m+L} \right)\|x\|^2 + 2\tau - \|x\|^2\\
            & \leq -\tau\left(\frac{2mL}{m+L}K^2 -2 \right)
        \end{aligned}
    \end{equation*}
    which is negative if $K$ is sufficiently large. Thus, with $C=\overline{B_K(0)}$, $\Delta V(x)\leq 0$ for $x\in C^c$. By step 2, the set $C$ is small and therefore petite rendering the chain Harris \cite[Theorem 9.1.8]{meyn2012markov}. We have already shown that the chain is ergodic and irreducible, rendering it \emph{positive} Harris. Thus, LLN is satisfied \cite[Theorem 17.0.1]{meyn2012markov}. As for the explicit scheme the additional statement is a direct consequence of Theorems 3.1 and 4.2 and Proposition 4.5 by \cite{latuszynski2013nonasymptotiv}. 
\end{proof}
\begin{lemma}
\label{lem:bounded_second_moments}
If the second moment of $\mu_0$ is finite, then the second moments of $(\mu_k^\tau)_{\tau,k \in \N}$ are uniformly bounded, that is, for any $\tau_{\text{max}}<\frac{2}{m+L}$ it holds
\[
\sup_{\tau \in [0,\tau_{\text{max}}]} \sup_{k \in \N} \mathbb{E} \left[ \|X_k^\tau\|^2 \right] < \infty.
\]
\end{lemma}
\begin{proof}
From~\eqref{eq:implicite_LLN} it follows
    \begin{equation*}
        \begin{aligned}
            \Ex[\|X_{k+1}^\tau\|^2]
            &\leq \Ex \left[\left(1-\tau\frac{2 m L}{m+L}\right)\|X_k^\tau\|^2+2\tau\|W_k\|^2\right] \\
            &= \left(1-\tau\frac{2 m L}{m+L}\right)\Ex[\|X_{k}^\tau\|^2] + 2\tau.
        \end{aligned}
    \end{equation*}
    Solving the recursion yields the desired result.
\end{proof}
\begin{remark}
    As a consequence also the family $(Y_k^\tau)_{k \in \N,\tau}$ defined by $Y_{k+1}^\tau = X_{k+1}^\tau - \tau\nabla F(X_{k+1}^\tau)+\sqrt{2\tau} W_{k+1}$ has uniformly bounded second moments in the sense of \Cref{lem:bounded_second_moments}.
\end{remark}
\begin{theorem}
\label{prop:implicit_final_convergence}
The stationary distribution $\pi^\tau$ of the semi-implicit scheme satisfies $\Wc^2_2(\pi^\tau,\pi)\leq c\tau$ for some $c>0$.
\end{theorem}
\begin{proof}
    Under the additional assumption that $G$ is Lipschitz, \cite{durmus2019analysis} show that with step size $\tau>0$, for $k$ large enough, it holds true that $\Wc^2_2(\mu_k^\tau,\pi)\leq C\tau$ for some constant $C>0$ \cite[Theorem 21]{durmus2019analysis}. We will show that this result remains true under the assumptions of this paper. As a consequence, since we have already proven that the sequence $(\mu_k^\tau)_{k \in \N}$ converges to some stationary distribution $\pi^\tau$ with respect to $\Wc_2$ as $k \rightarrow \infty$, it follows $\Wc^2_2(\pi^\tau,\pi)\leq C\tau$. A closer investigation of Lemmas 29, 30 from \cite{durmus2019analysis} reveals that, in order to ensure an approximation in Wasserstein-$2$ distance, it is sufficient to guarantee that for $Y_{k+1}^\tau = X_{k+1}^\tau - \tau\nabla F(X_{k+1}^\tau)+\sqrt{2\tau} W_{k+1}$ we have
    \[
    \sup_{k \in \N} \Ex[|G(Y_k^\tau)-G({\prox}_{\tau G}(Y_k^\tau))|] = \Oc(\tau) \quad \text{as } \tau\rightarrow 0.
    \]
    We denote $f_\tau(y)\coloneqq G(y)-G({\prox}_{\tau G}(y))$ and we note that $f_{\tau} (y) \ge 0$ for all $y \in \R^d$ by the definition of $\text{prox}_{\tau G}$. 
    By~\Cref{Ass:F_G} $G$ admits a miminizer which we now denote as $x^*$. By the definition of the prox it follows
    \begin{equation*}
        \frac{1}{2\tau}\|y-{\prox}_{\tau G}(y)\|^2 + G({\prox}_{\tau G}(y))\leq \frac{1}{2\tau}\|y-x^*\|^2 + G(x^*)
    \end{equation*}
    and, since $G({\prox}_{\tau G}(y))\geq G(x^*)$, $\|y-{\prox}_{\tau G}(y)\| \leq \|y-x^*\|$ implying $\|{\prox}_{\tau G}(y)\|\leq c(\|y\|+1)$ for some $c>0$.
    By the linear growth of the subdifferential of $G$, it follows $\|\partial G ({\prox}_{\tau G}(y))\|\leq c (1 +\|y\|)$.
    Noting that $\frac{1}{\tau}({\prox}_{\tau G}(y) - y)\in\partial G({\prox}_{\tau G}(y))$ and putting everything together, we can estimate
    \begin{equation*}
        \begin{aligned}
            0 \le f_\tau(y) =  G(y)-G({\prox}_{\tau G}(y)) 
            \leq |\langle\partial G(y), {\prox}_{\tau G}(y) - y\rangle|
            &\leq c (\|y\|+1) \|{\prox}_{\tau G}(y) - y\|\\
            &\leq \tilde{c} (\|y\|^2+ \|y\| +1) \tau,
        \end{aligned}
    \end{equation*}
    for some $\tilde{c}>0$. Thus, $\Ex[f_\tau(Y_k^\tau)] \leq \Ex[\tilde{c} (\|Y_k^\tau\|^2+ \|Y_k^\tau\| +1) \tau]$ and hence by \Cref{lem:bounded_second_moments}
    \[
    \sup_{k \in \N} \Ex[f_\tau(Y_k^\tau)] \leq \sup_{k \in \N} \left[  \Ex[\tilde{c} (\|Y_k^\tau\|^2+ \|Y_k^\tau\| +1) \tau] \right] = \mathcal{O} \left( \tau \right).
    \]
\end{proof}

\section{Numerical Experiments}\label{sec:experiments}
In this section we show several numerical experiments confirming the proven convergence results and verifying the practical relevance of the investigated algorithms. We will refer to \eqref{eq:explicit_scheme} as the \emph{explicit scheme} and to \eqref{eq:semi_implicit_scheme} as the \emph{implicit scheme}. In all examples we consider potentials factoring as $U(x)=F(x)+G(x)$. The code to reproduce the results is publicly available \cite{habring2024subgrad_git}. Additional implementation details can be found in the supplemental material, \Cref{S-sec:implementation_details}
\subsection{Experiments in 2D}\label{sec_2d_experiments}
For the first set of experiments we consider distributions on $\R^2$. The functional $F$ will be fixed as $F(x) = \frac{1}{2}\|x-x_0\|^2$ with $x_0 \in (0,1)$. For $G(x)$ we consider two different settings as will be explained in the following two subsections. The purpose of these low-dimensional experiments is to confirm the convergence guarantees in the most general setting. For practical examples we refer the reader to \Cref{section:experiments_imaging} where we apply the algorithms to image denoising and deconvolution. To evaluate the methods in the low-dimensional case we will sample multiple independent Markov chains in parallel and estimate the Wasserstein-$2$ distance (using POT~\cite{flamary2021pot,flamary2024pot}) and total variation distances between the empirical distribution induced by the sample and a histogram approximating the target density $\pi$. For details regarding the computation of these metrics we refer to~\Cref{S-sec:computation_of_metrics} in the supplement. Moreover, in~\Cref{S-sec:physical_time} in the supplement, in addition to the presented convergence plots we show plots of error over physical time instead of the number of iterations supporting the approximation of an underlying continuous time process.
\subsubsection{Without linear operator: $U(x)=F(x)+G(x)$}
\label{sec:without_operator}
As a first choice for $G$ we consider $G(x) = \theta\left(|x_1|_{*} + |x_2|_{*}\right)$, with the \emph{mixed norm} $|t|_{*} = t$ if $t\geq 0$ and $|t|_{*}=\frac{2}{3}|t|^\frac{3}{2}$ if $t< 0$ (see \Cref{fig:mixed_norm}).
\begin{figure}
    \centering
    \includegraphics[width=0.5\linewidth]{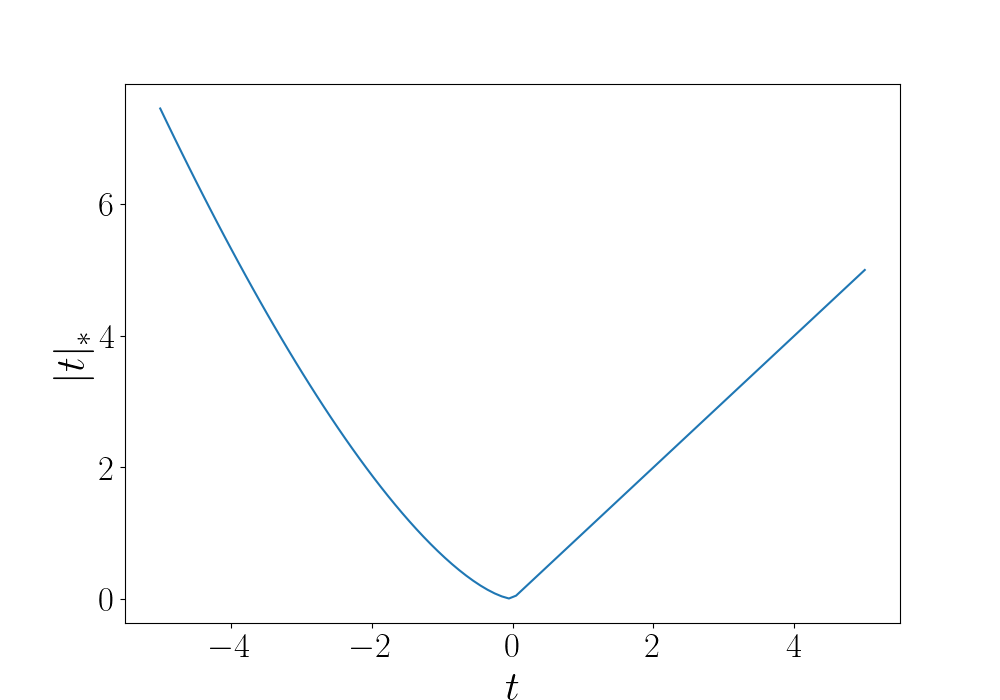}
    \caption{Mixed norm $|t|_*$ used in the 2D examples.}
    \label{fig:mixed_norm}
\end{figure}
We set the regularization parameter $\theta = 5$ in this example. Note that such $G(x)$ is not differentiable in $x=0$ and for $x<0$ it is neither Lipschitz nor gradient Lipschitz. Thus, this particular choice incorporates all the possible non-regularities which can be handled with the proposed sampling methods and for which previously direct sampling (without prior smoothing of the potential) had not been possible. For comparison we will also add the results obtained by sampling using MYULA \cite{durmus2022proximal}, that is,
\begin{equation}
\label{eq:myula}
    X_{k+1}^M = \left (1-\frac{\tau}{\lambda} \right) X_k^M - \tau \nabla F(X_k^M) + \frac{\tau}{\lambda}{\prox}_{\lambda G}(X_k^M) + \sqrt{2\tau}W_{k+1}. \tag{MYULA}
\end{equation}
MYULA corresponds to sampling from a smoothed potential $U^\lambda = F+G^\lambda$ where $G$ is replaced by its Moreau envelope $G^\lambda$. The parameter $\lambda$ for the Moreau envelope needs to satisfy $\tau\leq\frac{\lambda}{\lambda L +1}$ \cite[Section 3.3]{durmus2022proximal} and we set it as $\lambda = \frac{\tau}{1-L\tau}$ to obtain minimal bias. In \Cref{fig:without_K_convergence} we show the Wasserstein-2 and---since the convergence of MYULA is proven in TV \cite[Theorem 2]{durmus2022proximal}---the total variation distance between the target density and the iterates of the proposed algorithms and MYULA for different step sizes. For all methods we can observe the desired exponential convergence and a saturation at a non-zero bias which decreases with $\tau$. We also present the computational complexity of the different methods, for which we do not find any significant differences in this experiment, see the left part of \Cref{table:computation_times_2d}.
\begin{figure}[h]
    \centering
    \includegraphics[width=0.75\linewidth]{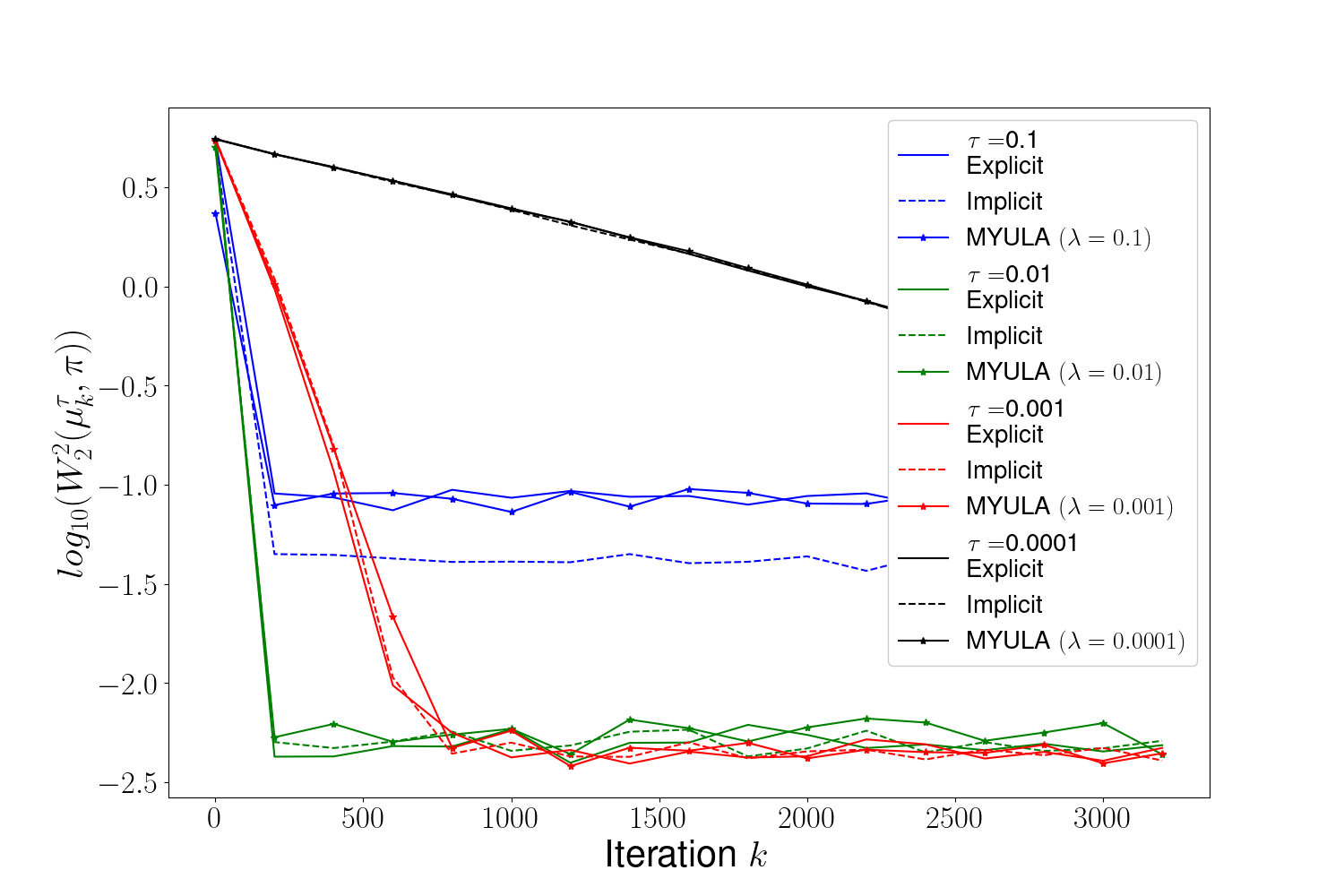}
    \includegraphics[width=0.75\linewidth]{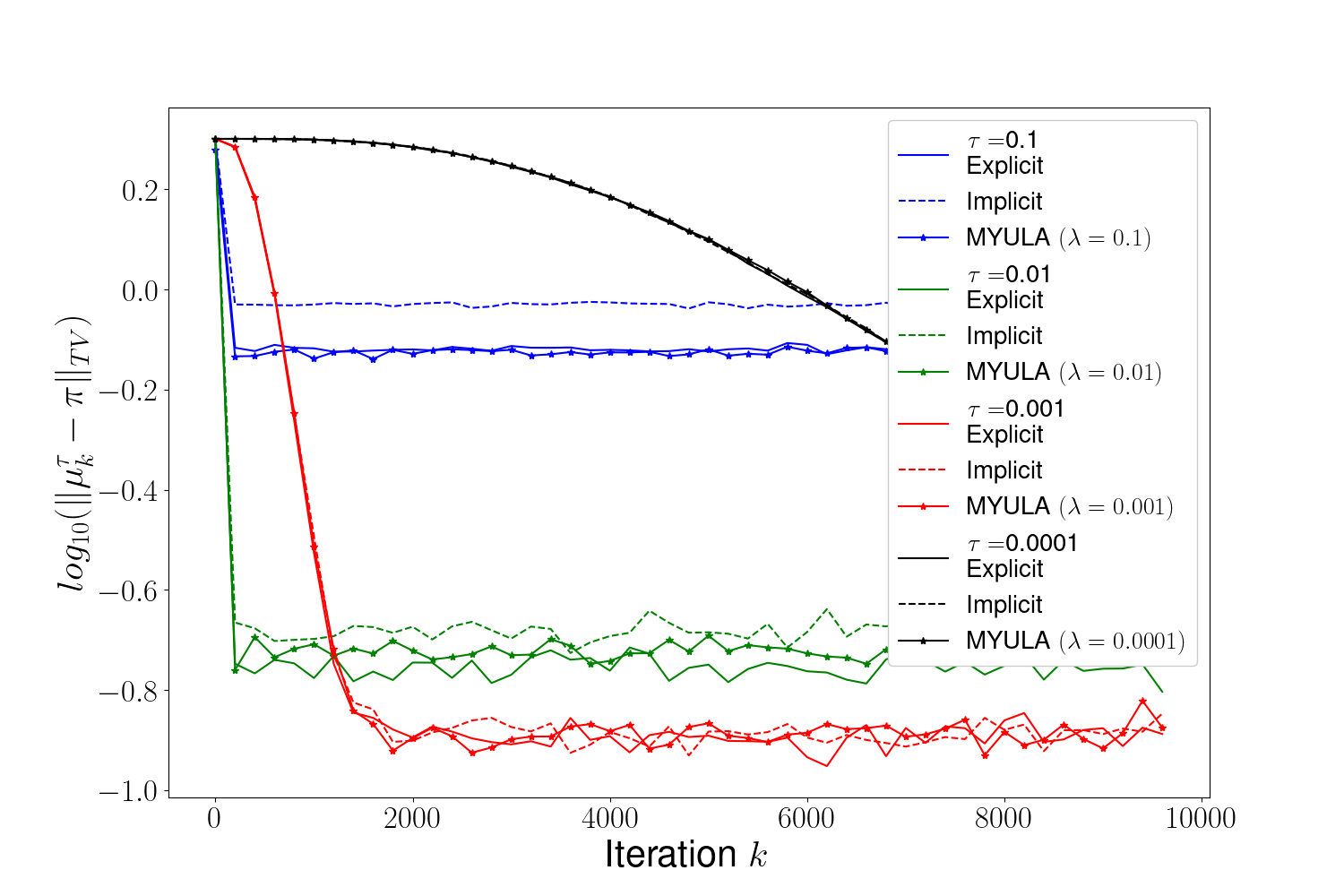}
    \caption{Potential $U(x)=F(x)+G(x)$. On the top we show the Wasserstein-$2$ and on the bottom the total variation distance between samples and target $\pi$ for different step sizes and methods.}
    \label{fig:without_K_convergence}
\end{figure}

\begin{table}[h]
\centering
\begin{tabular}{c|ccc|cc}
 & \multicolumn{3}{c|}{$U(x)=F(x)+G(x)$} & \multicolumn{2}{c}{$U(x)=F(x)+G(Kx)$}\\
\midrule
$\tau$ & MYULA & Explicit& Implicit & MYULA & Explicit\\
\midrule
$10^{-1}$ & 0.61&   0.52&    0.58&  4.9&    0.53\\
$10^{-2}$ & 0.60&   0.51&    0.59&  2.3&    0.54\\
$10^{-3}$ & 0.60&   0.50&    0.59&  15.8&    0.53\\
$10^{-4}$ & 0.60&   0.50&    0.59&  19.3&    0.53
\end{tabular}
\vspace*{0.2cm}
\caption{2D sampling with and without a linear operator $K$. Average computation times in seconds for 1000 iterations for different methods and step sizes $\tau$. We show the times for computing 1e4 parallel Markov chains simultaneously. The computation times in the first column refer to~\Cref{sec:without_operator}, wheres the second column refers to~\Cref{sec:with_linear_operator}. Simulations were performed on the CPU using 24 AMD Ryzen 9 3900X 12-Core Processors.}
\label{table:computation_times_2d}
\end{table}

\subsubsection{With linear operator: $U(x)=F(x)+G(Kx)$}
\label{sec:with_linear_operator}
For this experiment we again use the function $G$ from \Cref{sec:without_operator}, however, we include a non-trivial linear operator within $G$ and consider the potential $U(x) = F(x) + G(Kx)$ with $Kx = x_2-x_1$. Note that in this case the proximal mapping of $G\circ K$ is not explicit. Therefore, we omit the experiments with the semi-implicit variant of the proposed method and for MYULA we compute the prox using the primal-dual algorithm \cite{primal_dual_algo}. The subroutine is terminated when the difference between consecutive iterates is less than 1e-4 in the maximum norm. We show the same convergence plots as in the previous section in \Cref{fig:with_K_convergence} confirming the theoretical results for the Wasserstein-$2$ and the total variation convergence. In this experiment a clear bias reduction of the proposed method compared to MYULA can be observed and is most likely due to the Moreau-Yoshida approximation of $G$. The computation times for this experiment can be found in \Cref{table:computation_times_2d} on the right side. Due to the iterative computation of the prox, we find a significantly greater computational effort for MYULA. Moreover, the computational effort in this setting becomes dependent on the parameter $\lambda$ as it affects the convergence of the primal-dual subroutine.
\begin{figure}[h]
    \centering
    \includegraphics[width=0.75\linewidth]{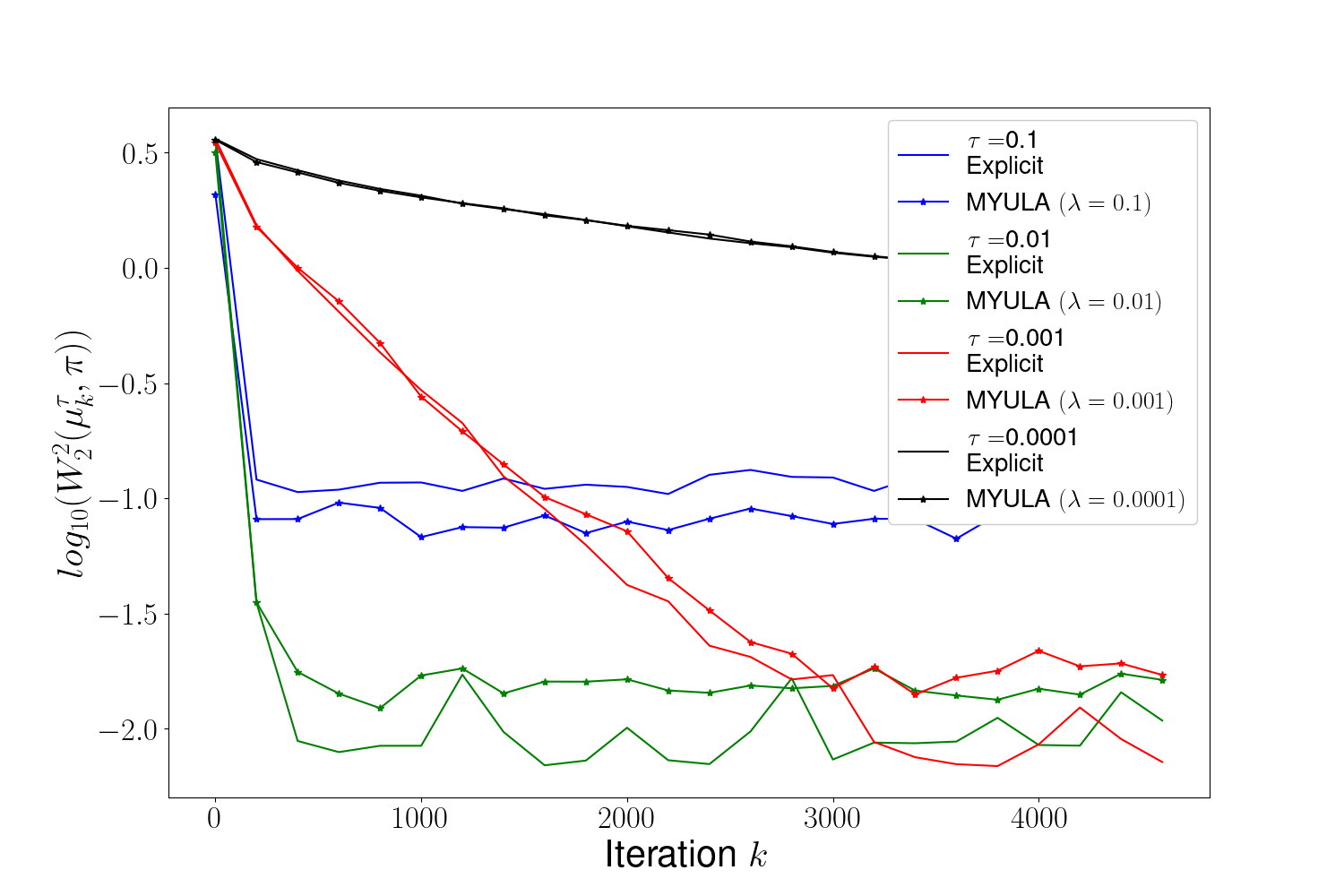}\\
    \includegraphics[width=0.75\linewidth]{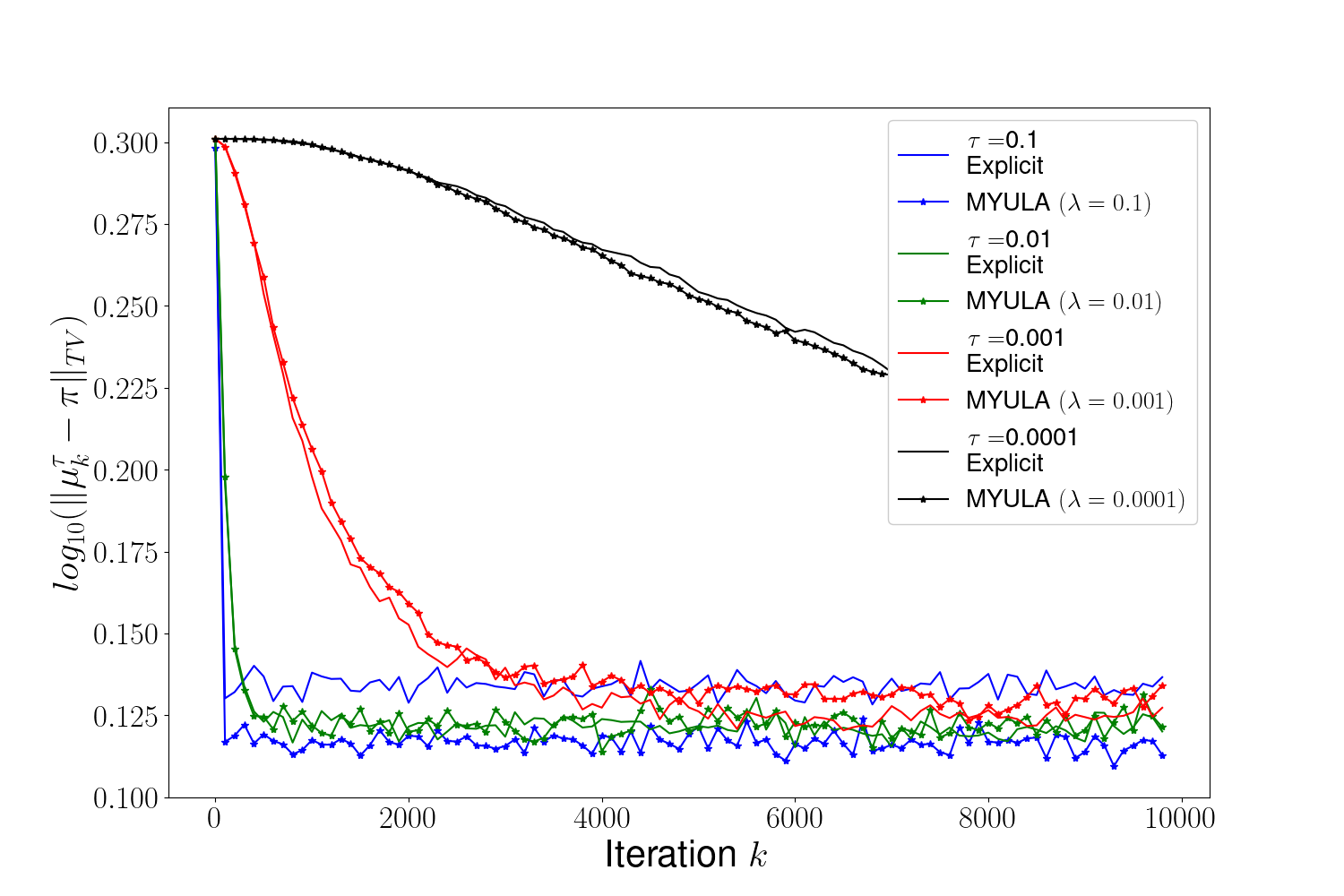}
    \caption{Potential $U(x)=F(x)+G(Kx)$ with linear operator in $G$. On the left we show the Wasserstein-$2$ and on the right the total variation distance between samples and target $\pi$ for different step sizes and methods.}
    \label{fig:with_K_convergence}
\end{figure}

\subsection{Imaging Examples}\label{section:experiments_imaging}
In this section we show two applications in the context of inverse imaging problems, namely image denoising and deconvolution. In both cases, we define $G$ as the total variation functional \cite{rudin1992tv_mh}, $G(x) = \theta \TV(x)$ with $\theta>0$ a regularization parameter and $\TV(x)$ the anisotropic total variation, that is, $\TV(x) = \|Kx\|_{1,1}$ with $\|p\|_{1,1} = \sum_{i=1}^N\sum_{j=1}^M|p_{i,j}^1| + |p_{i,j}^2|$ for $p\in\R^{N\times M\times 2}$ and the finite differences operator $K:\R^{N\times M}\rightarrow \R^{N\times M\times 2}$ defined as
\begin{equation*}
    \begin{aligned}
        (Kx)^1_{i,j} = \begin{cases}
            x_{i+1,j}-x_{i,j}\quad i<N\\
            0\quad \text{otherwise,}
        \end{cases}\quad
        (Kx)^2_{i,j} = \begin{cases}
            x_{i,j+1}-x_{i,j}\quad j<M\\
            0\quad \text{otherwise.}
        \end{cases}
    \end{aligned}
\end{equation*}
While the presented methods would also work for the favorable isotropic TV functional, we will also compare the results to the ones obtained using Belief Propagation (BP) \cite{tapfre03,knobelreiter2020belief,narnhofer2022posterior} which is only applicable in the anisotropic case. 
\subsubsection{Image denoising}
In the case of image denoising the potential reads as $U(x) = \frac{1}{2\sigma^2}\|x-y\|^2 + \theta\TV(x)$. For the data $y$ we use the ground truth image corrupted with additive Gaussian noise with standard deviation $\sigma = 0.05$. The parameter $\theta$ is set to $\theta = 30$. To reduce the computational effort we sample only a single Markov chain. We let the chain run for a burnin phase of 5e5 iterations and afterwards use the following  5e5 samples to approximate expected value and variance using running means which is possible as the chains satisfy the LLN (\Cref{prop:LLN_explicit}). As a \emph{ground truth} to compare to, we employ the BP algorithm which---despite not being exact---yields highly accurate estimates of the marginal distribution for each pixel and a discretized gray-scale space. In \Cref{fig:image_denoising_results}, for visual comparison, we show the final estimates of the expected values for all used methods. While in such high-dimensional examples, computing metrics of the entire distributions is far too computationally expensive, as a proxy we compare the expected values and variances of the explicit scheme and MYULA each to the BP results, which is illustrated in \Cref{fig:image_denoising_convergence}. Note that these expected values and variances are again elements of $\R^d$ with $d$ the number of image pixels and the errors shown are computed as the $L^2$ difference with respect to all pixels between estimate and target. Again, MYULA exhibits an increased bias compared to the proposed method. Lastly, we report the computation times in \Cref{table:computation_times_imaging}. Here, we find a significant increase in computational effort with MYULA caused by the iterative computation of the proximal mapping. However, it should be noted that this could be reduced by employing a rougher approximation of the prox, that is, fewer iterations in the primal-dual subroutine.
\begin{figure}[h]
    \centering
    \begin{subfigure}{\textwidth}
    \centering
    \includegraphics[height=2cm,valign=t]{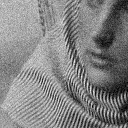}%
    \includegraphics[height=2cm,valign=t]{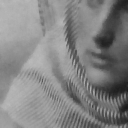}%
    \includegraphics[height=2cm,valign=t]{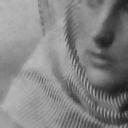}%
    \includegraphics[height=2cm,valign=t]{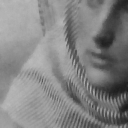}%
    \includegraphics[height=2cm,valign=t]{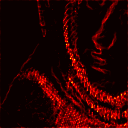}%
    \includegraphics[height=2cm,valign=t]{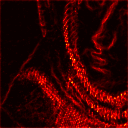}%
    \includegraphics[height=1.9cm,trim={2.72cm 1.8cm 0 1.5cm},valign=t]{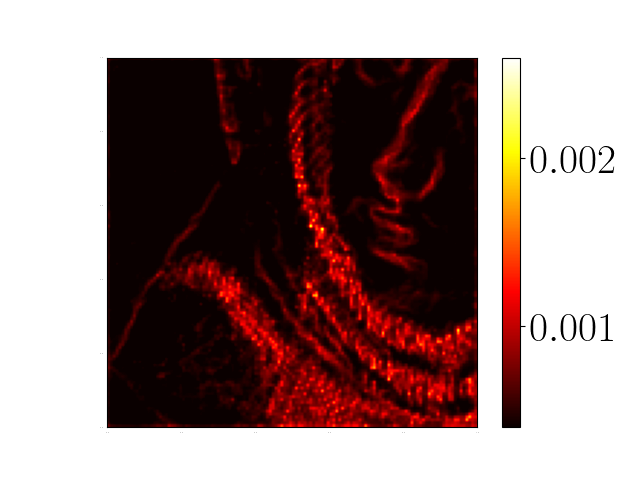}
    \end{subfigure}\\
    \begin{subfigure}{\textwidth}
    \centering
    \includegraphics[height=2cm]{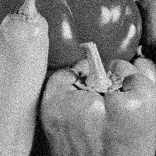}%
    \includegraphics[width=2cm]{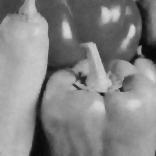}%
    \includegraphics[height=2cm]{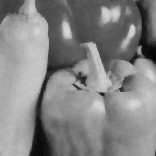}%
    \includegraphics[width=2cm]{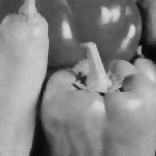}%
    \includegraphics[width=2cm]{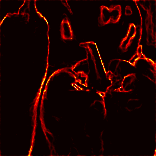}%
    \includegraphics[height=2cm]{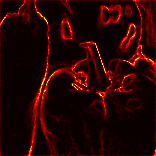}%
    \includegraphics[width=2.88cm,trim={3cm 1.5cm 0 1.5cm},clip]{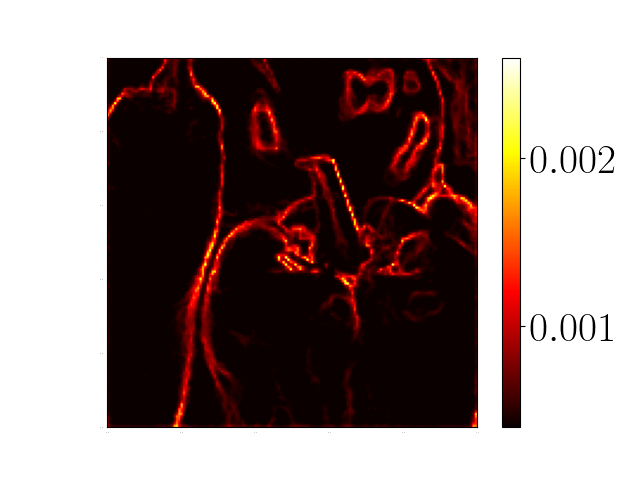}
    \end{subfigure}
    \caption{Denoising: estimated expected values and variances. From left to right: Corrupted image $y$, expected values computed with BP, MYULA, and the proposed explicit scheme. Then variances computed with BP, MYULA, and the proposed explicit scheme. For MYULA and the proposed method we compute the statistics using the following 5e5 iterates after a burnin phase of 5e5 iterations.}
    \label{fig:image_denoising_results}
\end{figure}

\begin{figure}[h]
    \centering
    \includegraphics[width=0.49\textwidth]{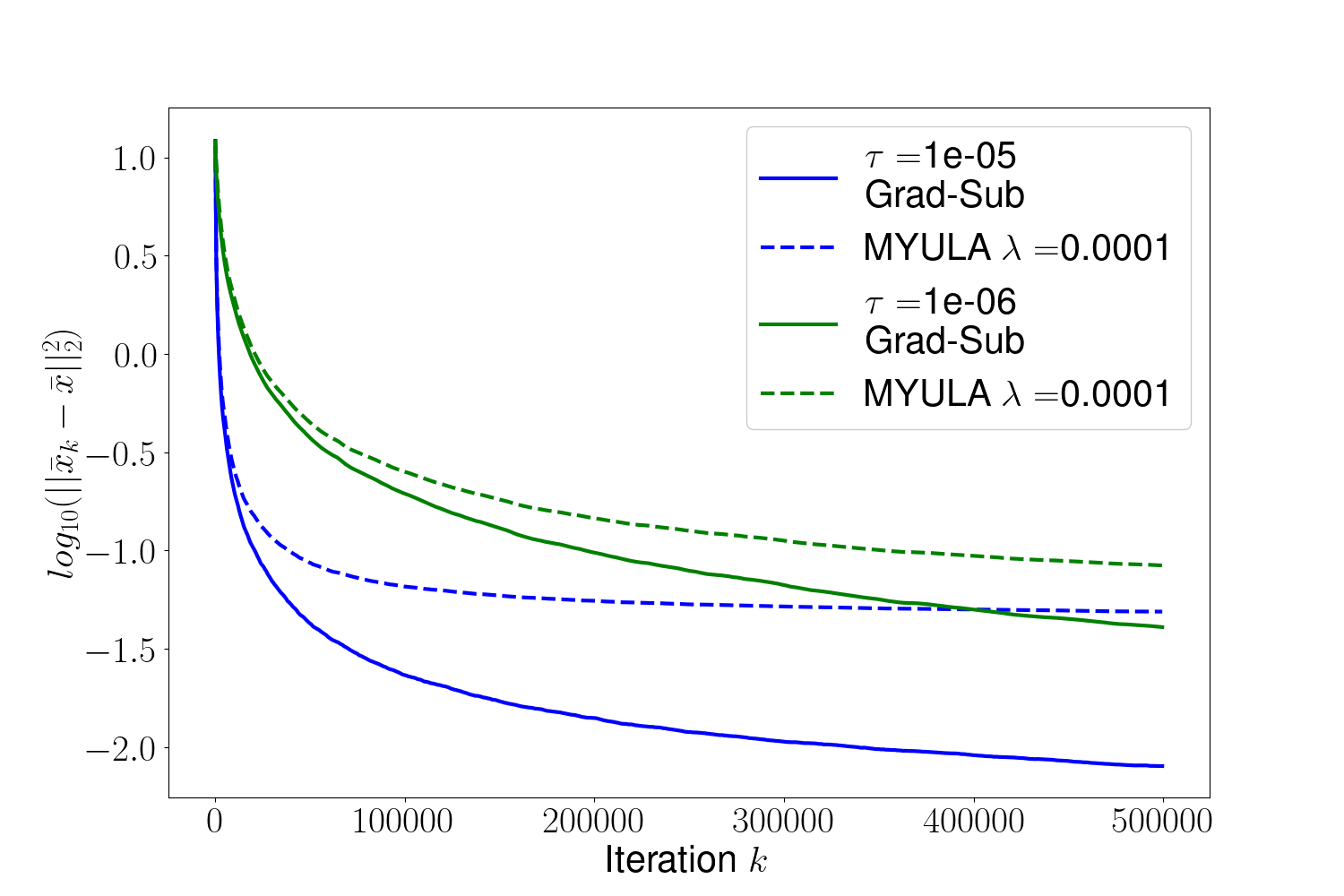}
    \includegraphics[width=0.49\textwidth]{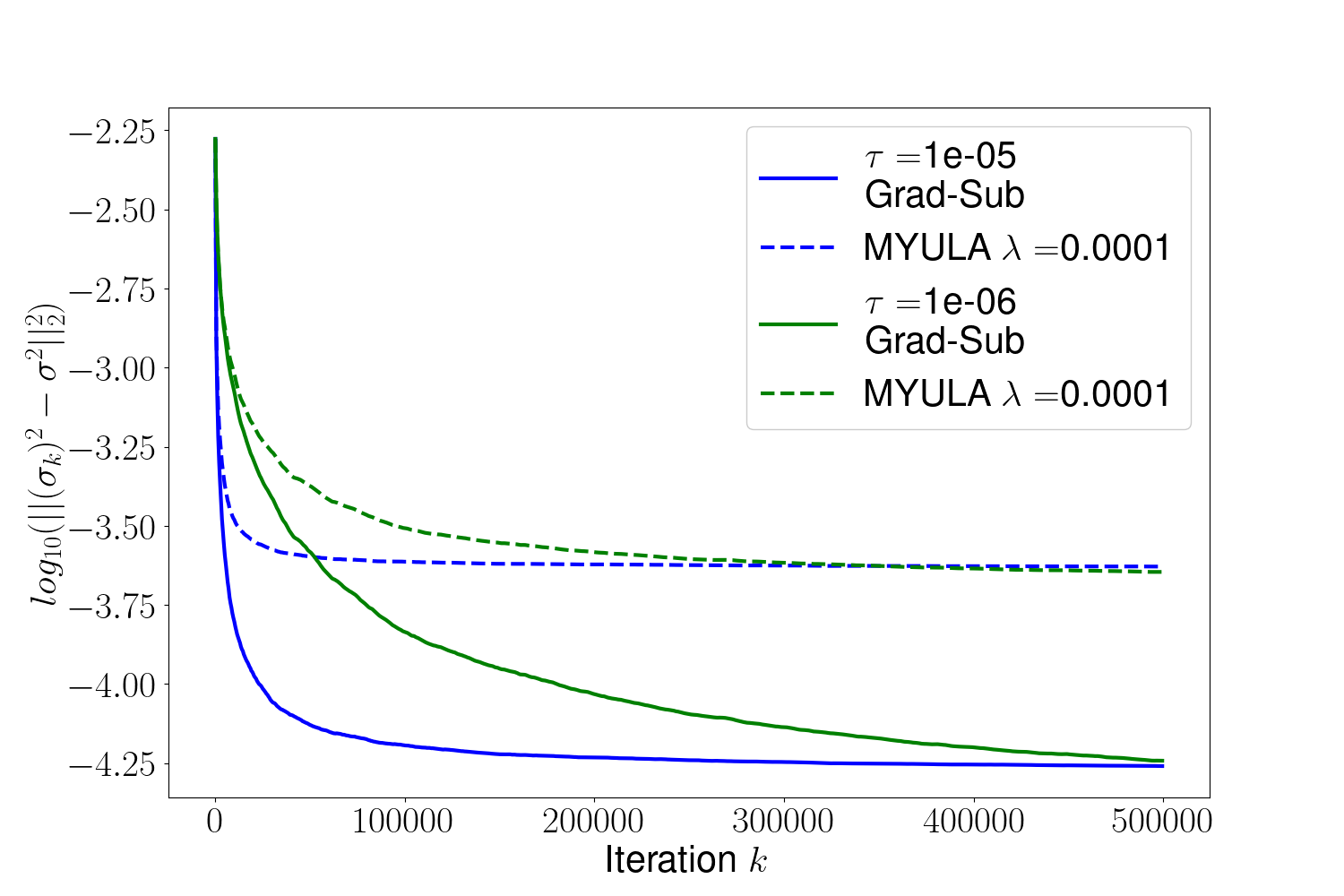}
    \caption{Denoising: $L_2$-error of estimated expected value (left) and variance (right) of the proposed explicit scheme and MYULA each compared to BP results for the peppers image. We use a burnin phase of 5e5. The symbols $\bar{x}_k,\sigma_k$ denote the emprical expected value and variance using $k$ successive iterates, $\bar{x},\sigma$ the estimates from BP.}
    \label{fig:image_denoising_convergence}
\end{figure}
\begin{table}[h]
\centering
\begin{tabular}{c|cc|cc}
 & \multicolumn{2}{c|}{Denoising} & \multicolumn{2}{c}{Deconvolution}\\
\midrule
$\tau$ & MYULA & Explicit & MYULA & Explicit\\
\midrule
$10^{-5}$ & 60.1& 0.71&    46.5&    1.4\\
$10^{-6}$ & 60.6& 0.71&    46.7&    1.4\\
\end{tabular}
\vspace*{0.2cm}
\caption{Average computation times for the imaging examples in seconds for 1000 iterations for different methods and step sizes $\tau$. We show the times for computing a single Markov chain for the experiments with the peppers image. Simulations were performed on the CPU using 24 AMD Ryzen 9 3900X 12-Core Processors.}
\label{table:computation_times_imaging}
\end{table}

\subsubsection{Image deconvolution}
As a last experiment we consider image deconvolution. That is, the potential $U$ is defined as $U(x) = \frac{1}{2\sigma^2}\|k*x-y\|^2 + \delta \|x-y\|^2 + \theta\TV(x)$. Since the convolution has a non-trivial kernel, in order to obtain a strongly convex $F(x)$ we add a small quadratic penalty $\delta \|x-y\|^2$ with $\delta=1e-3$. We set $y=k*y_0+n$ with $y_0$ the clean ground truth image, $k\in\R^{5\times 5}$ a Gaussian kernel with standard deviation 1 and $n$ Gaussian noise with standard deviation $\sigma=0.05$. The regularization parameter is set to $\theta=20$. Again we use a burnin phase of 5e5 iterations and afterwards use the following 5e5 samples to approximate expected value and variance. As a \emph{ground truth} to compare to, we run the explicit scheme with a Metropolis-Hastings correction step which ensures direct convergence to the true target density \cite{roberts1996exponential}, \cite[Chapter 6]{robert1999monte}. There, we use a burnin phase of 1e6 iterations and the following 1e6 samples in order to be even closer to the true target. We show again the visual results in \Cref{fig:image_deconvolution_results}, the convergence behavior of expected value and variance in \Cref{fig:image_deconvolution_convergence}, and the computation times in \Cref{table:computation_times_imaging} with similar behavior as in the case of denoising.
\begin{figure}[h]
    \centering
    \begin{subfigure}{\textwidth}
    \centering
    \includegraphics[height=2cm,valign=t]{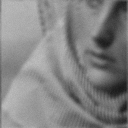}%
    \includegraphics[height=2cm,valign=t]{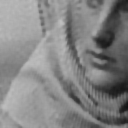}%
    \includegraphics[height=2cm,valign=t]{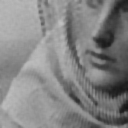}%
    \includegraphics[height=2cm,valign=t]{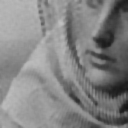}%
    \includegraphics[height=2cm,valign=t]{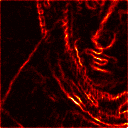}%
    \includegraphics[height=2cm,valign=t]{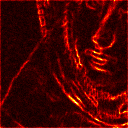}%
    \includegraphics[height=1.9cm,trim={2.72cm 1.8cm 0 1.5cm},valign=t]{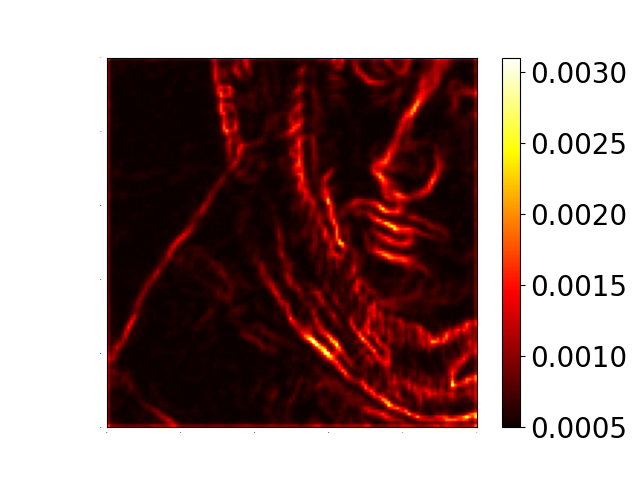}
    \end{subfigure}\\
    \begin{subfigure}{\textwidth}
    \centering
    \includegraphics[height=2cm]{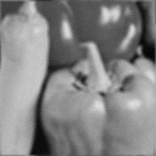}%
    \includegraphics[width=2cm]{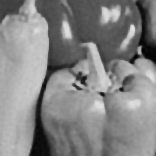}%
    \includegraphics[height=2cm]{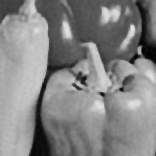}%
    \includegraphics[width=2cm]{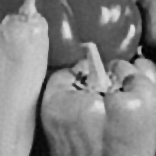}%
    \includegraphics[width=2cm]{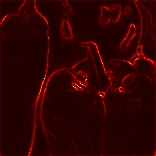}%
    \includegraphics[height=2cm]{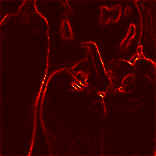}%
    \includegraphics[width=2.88cm,trim={3cm 1.5cm 0 1.5cm},clip]{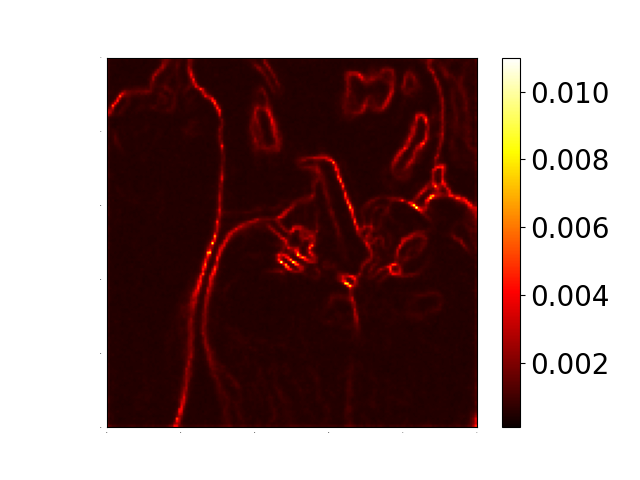}
    \end{subfigure}
    \caption{Deconvolution: estimated expected values and variances. From left to right: Corrupted image $y$, expected values computed with BP, MYULA, and the proposed explicit scheme. Then variances computed with BP, MYULA, and the proposed explicit scheme. For MYULA and the proposed method we compute the statistics using the following 5e5 iterates after a burnin phase of 5e5 iterations.}
    \label{fig:image_deconvolution_results}
\end{figure}
\begin{figure}[h]
    \centering
    \includegraphics[width=0.49\textwidth]{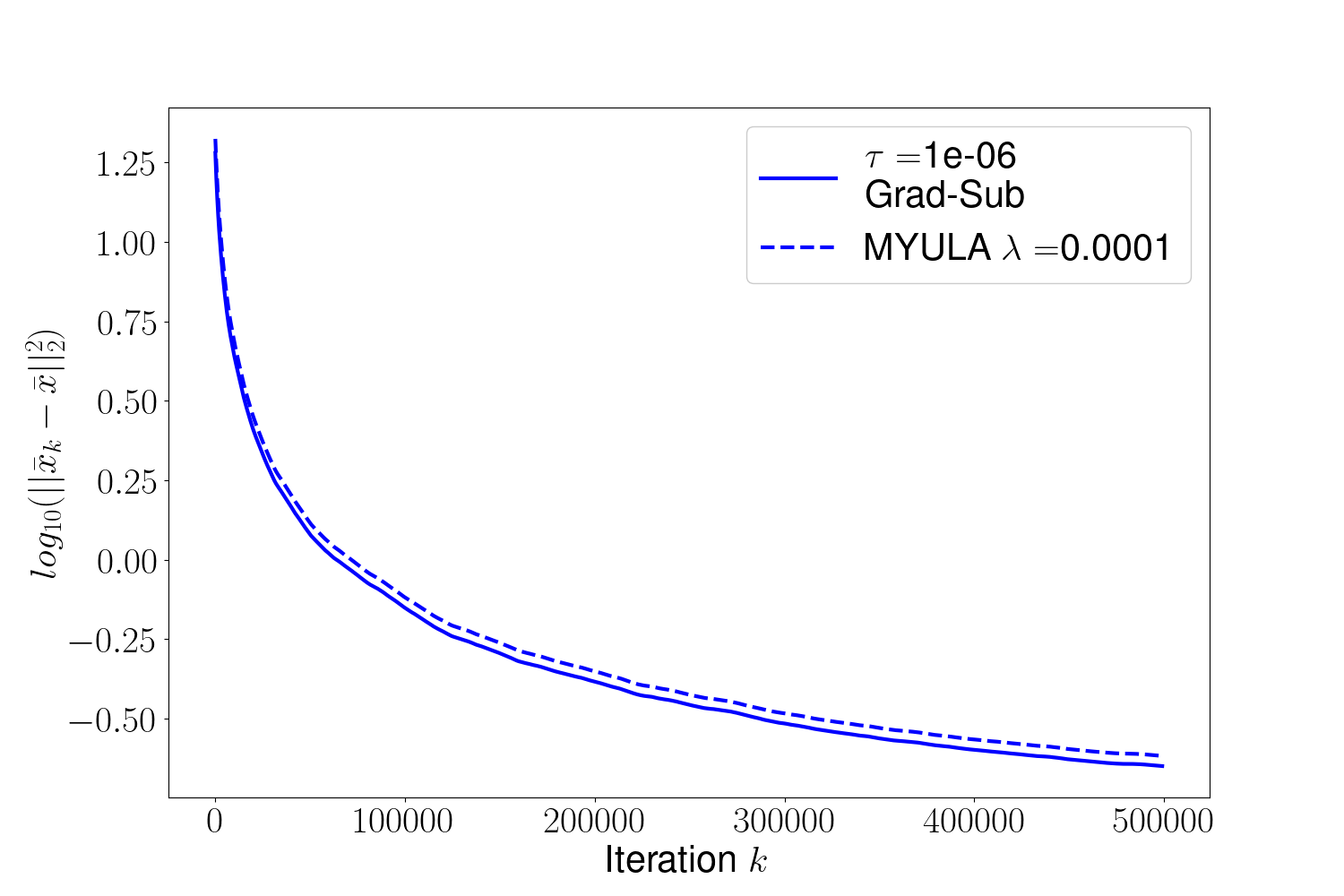}
    \includegraphics[width=0.49\textwidth]{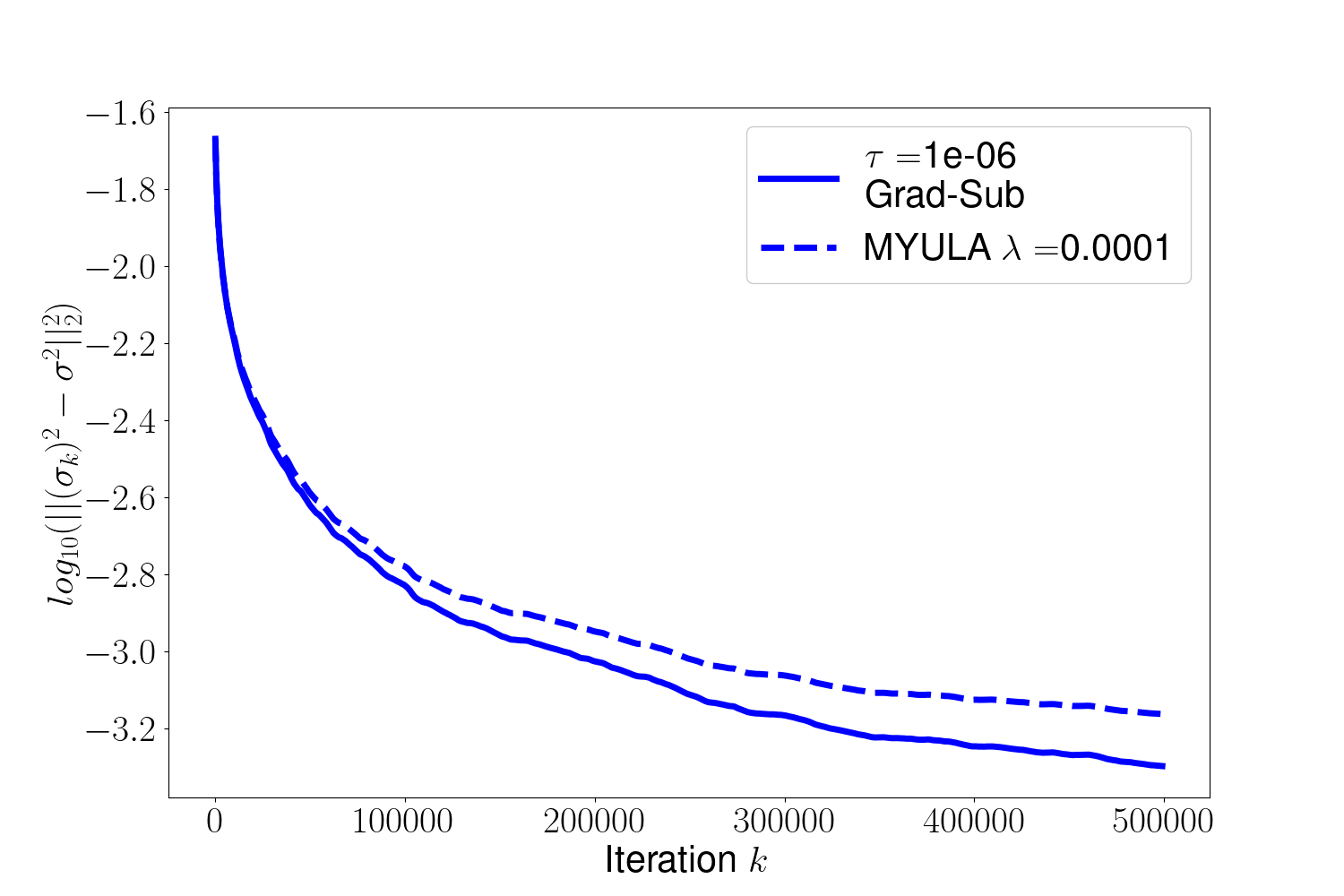}
    \caption{Denoising: $L_2$-error between estimated expected value (left) and variance (right) of the proposed explicit scheme and MYULA each compared to BP results for the peppers image. We use a burnin phase of 5e5. The symbols $\bar{x}_k,\sigma_k$ denote the emprical expected value and variance using $k$ successive iterates, $\bar{x},\sigma$ the estimates from BP.}
    \label{fig:image_deconvolution_convergence}
\end{figure}
\section{Conclusion}\label{sec:conclusion}
We have analyzed subgradient Langevin dynamics for sampling from densities induced by non-smooth potentials. We have proven exponential ergodicity in the time-continuous setting as well as geometric ergodicity of an explicit and a semi-implicit discretization scheme. Moreover, the discretizations are proven to satisfy the law of large numbers which allows for using consecutive iterates in order to compute statistics of the target density when simulating Markov chains. Lastly, numerical experiments confirm the theoretical findings.
\paragraph{Limitations and future work}
We aim to extend the results to functions which are strongly convex only outside a compact set. Moreover, we plan to extend the non-smooth analysis to kinetic Langevin dynamics.
\paragraph{Acknowledgments}
We want to thank Prof. Thomas Pock for providing helpful feedback for the article and Prof. Daniel Rudolf and Julian Hofstadler for pointing out additional references, which helped improve our work.

\bibliographystyle{siamplain}
\bibliography{references}

\ifarXiv
    \foreach \x in {1,...,\numbersupplementpages}
    {
        \clearpage
        \includepdf[pages={\x}]{\supplementfilename}
    }
\fi

\makeatletter\@input{xx.tex}\makeatother

\end{document}


\maketitle

\section{Implementation details}\label{sec:implementation_details}
In the following we provide some more details about the numerical implementation. We want to emphasize once again that the code is available at~\cite{habring2024subgrad_git}.

All simulations were performed on the CPU on a machine with 24 AMD Ryzen 9 3900X 12-Core Processors.

\subsection{Computation of total variation distance and Kullback-Leibler divergence}\label{sec:computation_of_metrics}
\paragraph{Wasserstein distance}
The Wasserstein-2 distances for the 2D experiments are estimated using the package Python Optimal Transport (POT)~\cite{flamary2021pot,flamary2024pot}.
Specifically, to estimate the Wasserstein-2 distance $\Wc_2(\mu_k^\tau,\pi)$ we simulate $N=1000$ independent chains $(X_{k,n}^\tau)_{k,n}$ where $k$ denotes the iteration and we added the subscript $n$ to index the independent chains. The distribution $\mu_k^\tau$ is then approximated by the empirical distribution 
\begin{equation}\label{S:eq:empricial_dist}
    \mu_k^\tau \approx\hat \mu_k^\tau = \frac{1}{N}\sum_{n=1}^N\delta_{X_{k,n}^\tau}
\end{equation}
where $\delta_x$ denotes the Dirac distribution in $x\in\R^d$. On the other hand, to obtain a ground truth target, we first approximate the partition function
\[
    Z=\int e^{-U(x)}\d x
\]
via numerical integration over $x\in [-10,10]^2$. Then we approximate the target $\pi$ by a piecewise constant density for which we grid the domain $[-10,10]^2$ with $200$ bins in both coordinates and afterward integrate $\pi$ over each bin to obtain the bin's probability. Then we approximate $\pi$ via
\begin{equation}\label{S:eq:empricial_dist_target}
    \pi \approx\hat \pi = \sum_{i,j=1}^{200}\delta_{x_{i,j}} p_{i,j}
\end{equation}
where $x_{i,j}$ is the center of bin $(i,j)$ and $p_{i,j}$ its probability. The Wasserstein distance between $\mu_k^\tau$ and $\hat \pi$ is then approximated using the POT function \texttt{emd}.
\paragraph{Total variation}
In order to compute the total variation distance, for the target $\pi$ we again compute the approximation $\hat\pi$ from~\eqref{S:eq:empricial_dist_target}. Then, based on the $N$ independent chains, we compute a histogram from $(X_{k,n}^\tau)_{n}$ with \texttt{numpy.histogram2d} for which we use the same bins as for $\hat\pi$. For these two empirical distributions the total variation distance can then be computed exactly as the L1 distance between the densities.

\section{Convergence in physical time}\label{sec:physical_time}
In this section we show the same plots as in \Cref{M-sec_2d_experiments} but over the physical time instead of the number of iterations. That is, for the chain with step size $\tau$ we plot the curve $\{(k\tau,X_k^\tau)\;|\;k\}$. As can be observed in \Cref{S:fig:without_K_convergence,S:fig:with_K_convergence} the Markov chains approximate an underlying continuous time process.
\begin{figure}[h]
    \centering
    \includegraphics[width=0.75\linewidth]{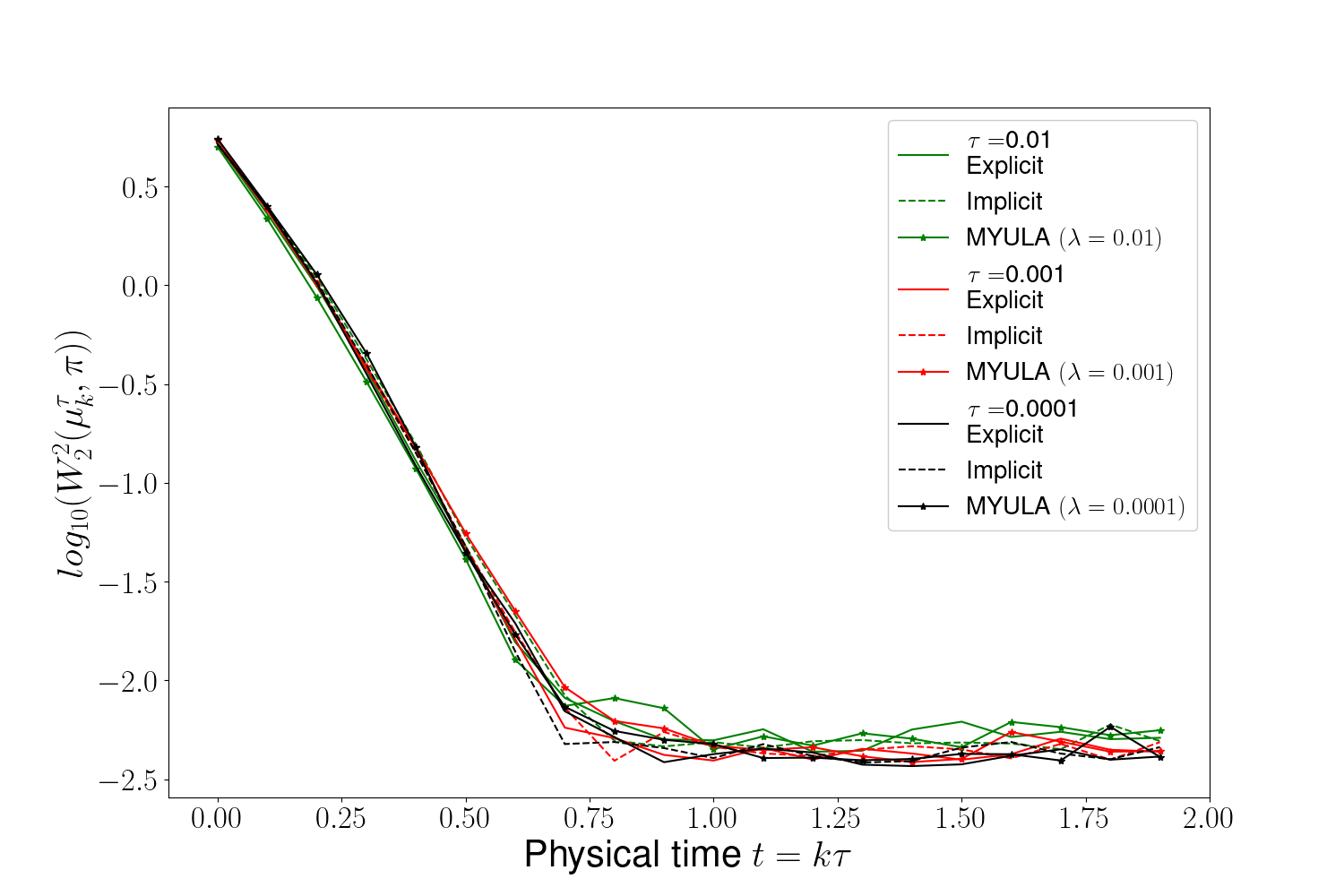}
    \includegraphics[width=0.75\linewidth]{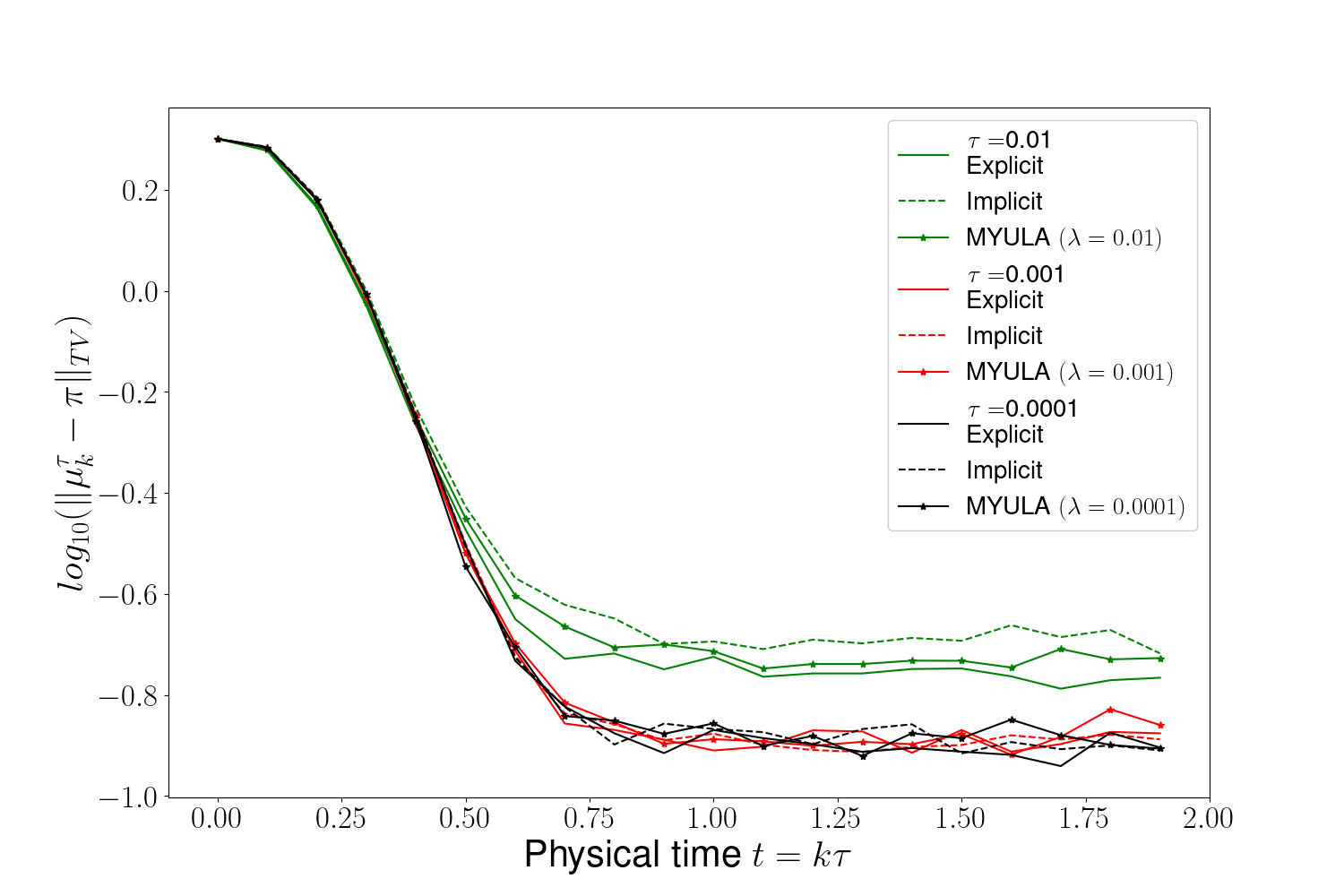}
    \caption{Potential $U(x)=F(x)+G(x)$. On the top we show the Wasserstein-$2$ and on the bottom the total variation distance between samples and target $\pi$ for different step sizes and methods. On the x-axis we show the physical time corresponding to the iteration.}
    \label{S:fig:without_K_convergence}
\end{figure}

\begin{figure}[h]
    \centering
    \includegraphics[width=0.75\linewidth]{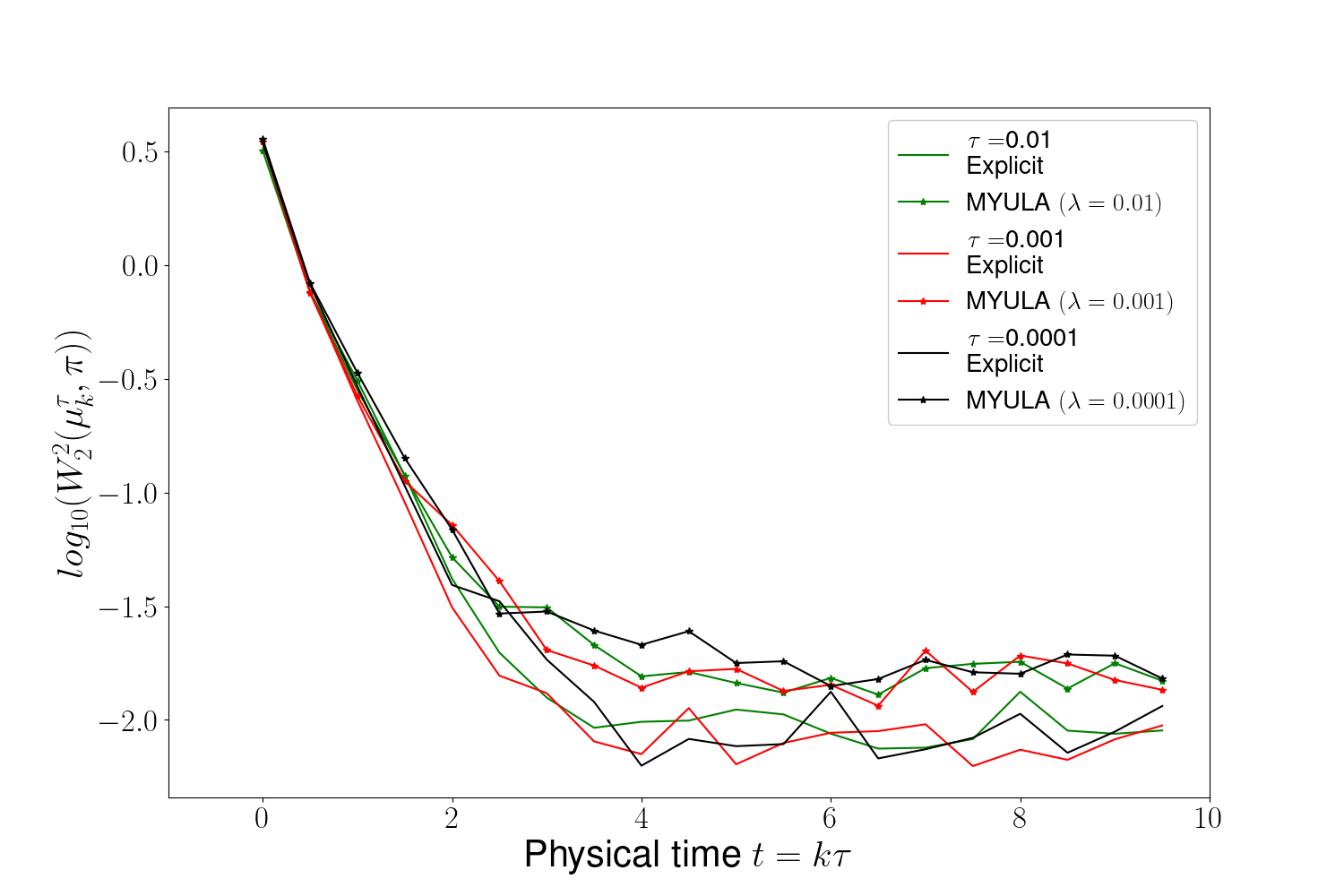}
    \includegraphics[width=0.75\linewidth]{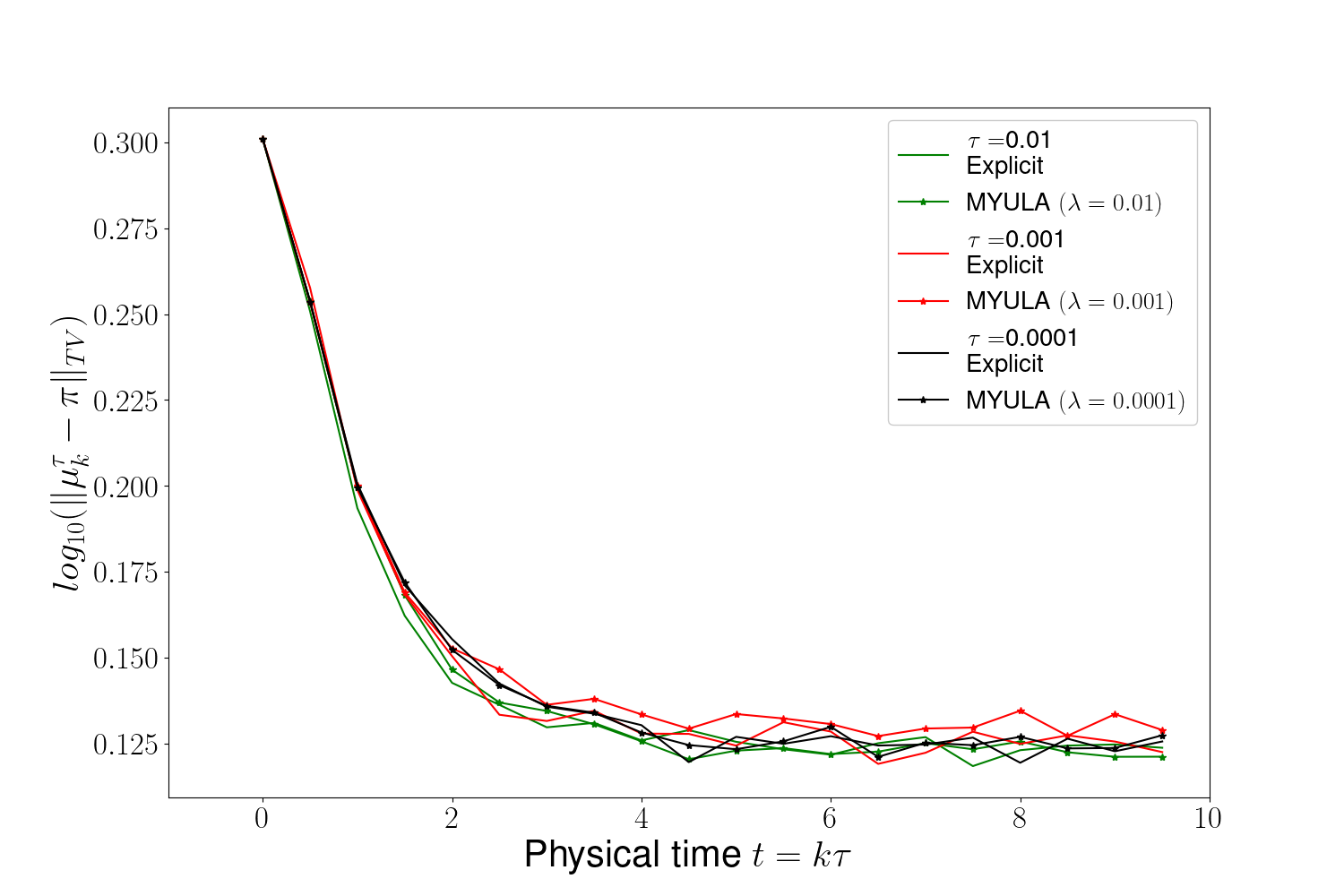}
    \caption{Potential $U(x)=F(x)+G(x)$. On the top we show the Wasserstein-$2$ and on the bottom the total variation distance between samples and target $\pi$ for different step sizes and methods. On the x-axis we show the physical time corresponding to the iteration.}
    \label{S:fig:with_K_convergence}
\end{figure}
\section{$V$-norm is weighted total variation}
In the following we provide a short proof of the (well-known) fact that the $V$-norm is, indeed, a weighted total variation norm.
\begin{lemma}\label{lemma:vnorm}
    Let $V:\R^d\rightarrow (0,\infty)$ and define for any Radon measure $\mu$ on $\Bc(\R^d)$ 
    \begin{equation}
        \|\mu\|_V=\sup_{g\leq V}\int g(x)\d \mu(x).
    \end{equation}
    Then it holds for any finite Radon measure $\mu$ on $\Bc(\R^d)$ with $\int V(x)\d |\mu|(x)<\infty$
    \begin{equation}
        \|\mu\|_V = \int V(x)\d |\mu|(x).
    \end{equation}
\end{lemma}
\begin{proof}
    We trivially have the inequality
    \begin{equation}
        \sup_{|g|\leq V}\int g(x) \d \mu(x) 
        \leq \int V(x) \d |\mu|(x).
    \end{equation}
    Conversely, since $V>0$ we find
    \begin{equation}
        \begin{aligned}
            \sup_{|g|\leq V}\int g(x) \d \mu(x) 
            =& \sup_{g\leq V} \int \frac{|g(x)|}{V(x)} V(x)\d \mu(x)\\
            =& \sup_{|f|\leq 1} \int f(x) V(x)\d \mu(x)\\
            \geq& \int V(x) \d |\mu|(x)
        \end{aligned}
    \end{equation}
    which follows from the fact that the total variation norm is the dual norm of the supremum norm on the space of continous functions vanishing at infinity~\cite[Remark 1.57]{ambrosio2000functions}.
\end{proof}

\bibliographystyle{siamplain}
\bibliography{references}